\DeclareRobustCommand{\cyrtext}{%
  \fontencoding{T2A}\selectfont\def\encodingdefault{T2A}}
\DeclareRobustCommand{\textcyrillic}[1]{\leavevmode{\cyrtext #1}}
\theoremstyle{plain}
\newtheorem{thm}{\protect\theoremname}
\theoremstyle{plain}
\newtheorem{lem}[thm]{\protect\lemmaname}
\newenvironment{proof}[1][\protect\proofname]{\par
	\normalfont\topsep6\p@\@plus6\p@\relax
	\trivlist
	\itemindent\parindent
	\item[\hskip\labelsep\scshape #1]\ignorespaces
}{%
	\endtrivlist\@endpefalse
}
\providecommand{\proofname}{Proof}
\theoremstyle{remark}
\newtheorem{rem}[thm]{\protect\remarkname}
\theoremstyle{plain}
\newtheorem{cor}[thm]{\protect\corollaryname}
\theoremstyle{definition}
\newtheorem{defn}[thm]{\protect\definitionname}
\journal{Example: Nuclear Physics B}
\providecommand{\corollaryname}{Corollary}
\providecommand{\definitionname}{Definition}
\providecommand{\lemmaname}{Lemma}
\providecommand{\remarkname}{Remark}
\providecommand{\theoremname}{Theorem}
\begin{document}
\begin{frontmatter}
\title{Flow Representation of the Navier-Stokes Equations in Weighted Sobolev
Spaces}
\author[sek1,sek2]{Sekson Sirisubtawee }
\author[chu1]{Naowarat Manitcharoen }
\author[chu1,chu2]{Chukiat Saksurakan\corref{cor1}}
\ead{chukiat.s@cit.kmutnb.ac.th}
\cortext[cor1]{Corresponding author}
\address[sek1]{Department of Mathematics, Faculty of Applied Science, King Mongkut’s
University of Technology North Bangkok, Bangkok 10800, Thailand }
\address[sek2]{Centre of Excellence in Mathematics, CHE, Si Ayutthaya Road, Bangkok
10400, Thailand }
\address[chu1]{Department of Social and Applied Science, College of Industrial Technology,
King Mongkut's University of Technology North Bangkok, Bangkok 10800,
Thailand }
\address[chu2]{Center of Sustainable Energy and Engineering Materials (SEEM), College
of Industrial Technology, King Mongkut's University of Technology
North Bangkok, Bangkok 10800, Thailand }
\begin{abstract}
Using Constantin-Iyer representation also known more generally as
Euler-Lagrangian approach, we prove the local existence of the Navier-Stokes
equations in weighted Sobolev spaces with external forcing on $\mathbf{R}^{d}$,
for any dimension $d$ and $p$ such that $p>d\geq2.$ 
\end{abstract}
\begin{keyword}
{\footnotesize Navier-Stokes equations} \sep {\footnotesize Euler-Lagragian
approach} \sep {\footnotesize Sobolev spaces }\sep{\footnotesize{}
Itô-Wentzell formula }\sep{\footnotesize{} harmonic analysis}{\footnotesize\par}

{\footnotesize\textbf{Mathematics Subject Classification: }{42B37,
60H30, 35R60, 35Q30}}{\footnotesize\par}
\end{keyword}
\end{frontmatter}

\section{Introduction}

The Navier-Stokes equations are widely regarded as one of the most
important equations in fluid mechanics due to their broad science
and engineering applications. While there is a huge amount of work
dedicated to the analysis of velocity equations, the Lagrangian approach
is less investigated. In Lagrangian settings, flow equations (positions
of individual particles) are derived and the solutions naturally allow
tracking of particles. In \citep{A,EM,E}, the Euler coordinates were
used to study motion of incompressible fluid on compact manifolds.
Fluid flows were treated as intrinsically defined infinitely dimensional
systems. In particular, Ebin and Marsden have shown in \citep{EM}
the local well-posedness of the deterministic Euler equations by solving
ODEs in the space of Sobolev volume-preserving diffeomorphisms. In
\citep{MV-1}, the author followed ideas of \citep{A,EM,E}, to study
the Euler equations with a random forcing term $\dot{W_{t}}$. Flow
equations were derived and solved for $d\geq2$ as ODEs in weighted
H\textcyrillic{ӧ}lder spaces. Without utilizing geometric tools, the
author dealt with spaces of diffeomorphisms analytically. More recently,
the authors of \citep{MMS} provided a stochastic framework extending
the geometric approach of Ebin and Mardens. Their results can be applied
to prove the local well-posedness of the stochastic Euler equations
with a random forcing term $\dot{W_{t}}$ in the Sobolev spaces with
$p=2$ and $s>\frac{d}{2}+1.$ 

The equivalence between the deterministic Navier-Stokes equations
and corresponding flow formulation was shown analytically in \citep{CI}.
Later, a self-contained proof for the local existence in H\textcyrillic{ӧ}lder
spaces was provided in \citep{I}. The main idea was to perturb the
flow by a Wiener process. By averaging out random trajectories, the
velocity can be recovered. While the results of \citep{CI} have motivated
a plethora of research into the Euler-Lagrangian approach of the Navier-Stokes
and Euler equations, only a handful has investigated it in Sobolev
spaces. In \citep{PR}, the Lagrangian formulation was used to prove
the local existence for the deterministic Euler equations in standard
Sobolev spaces $H_{p}^{s}$ with $p=2,$ $d\geq2$ and $s>\frac{d}{2}+1$
on the torus domain $\mathbb{T}^{d}:=\mathbb{R}^{d}/2\pi\mathbb{Z}^{d}.$
Under the same setting, their results were extended by \citep{OL}
to cover the stochastic Euler equations with Stratonovich transport
noise. 

While the geometric approach is elegant, the analytical approach is
arguably more accessible, especially for practitioners. For this reason,
we follow the idea of \citep{MV-1,CI,I} to study the Lagrangian formulation
of the Navier-Stokes equations. Our approach mostly relies on fundamental
results in harmonic analysis and can be easily understood by non-technical
readers. 

Our novel contribution can be summarized as follows:
\begin{itemize}
\item We study Constantin-Iyer representation for the Navier-Stokes equations
with random forcing $G\left(t\right)dt$ in the full space domain
instead of the Euler equations with/without Stratonovich transport
noise on tori \citep{PR,OL,FL19} or the Navier-Stokes equations without
forcing term on periodic domains \citep{CI,I}. Note that in \citep{CI},
the authors were well aware that $G\left(t\right)dt$ can be handled,
however, only formal discussion was provided. 
\item To the best of our knowledge, this work is the first to investigate
$L_{p}-$theory of Constantin-Iyer representation with general $p>d\geq2$
for the Navier-Stokes equations. Our results are new even without
the forcing term. We elaborate this further in the next point. 
\item Based on our Constantin-Iyer representation, we provide a self-contained
proof for the local existence of the Navier-Stokes equations in weighted
Sobolev spaces that can cover $p>d\geq2$ instead of non-weighted
Sobolev spaces with $p=2$ in \citep{PR,OL} or H\textcyrillic{ӧ}lder
spaces in \citep{CI,I,FL19}. Our work entails some additional analytical
results needed to handle the challenge of general $p$ in weighted
spaces compared to the case of $p=2$ in non-weighted spaces or the
H\textcyrillic{ӧ}lder spaces. We impose an assumption $l>1+\frac{d}{p}$
for the existence of the flow equations which is similar to $s>1+\frac{d}{2}$
found in \citep{PR,OL}. 
\item We also emphasize that our proof of existence covers the case of the
Euler equations namely $\epsilon=0$ without any special treatment
such as passing to the limit $\epsilon\rightarrow0.$
\end{itemize}
Let $\left(\Omega,\mathcal{F},\mathbf{P}\right)$ be a complete probability
space with a filtration of $\sigma$\textminus algebras $\mathbb{F}=\left(\mathcal{F}_{t},t\geq0\right)$
satisfying the usual conditions and $B_{t},W_{t}$ be independent
standard $d-$dimensional Wiener processes on $\left(\Omega,\mathcal{F},\mathbf{P}\right)$.
Let $\mathcal{\mathbb{F}}^{W}=\left(\mathcal{F}_{t}^{W},t\geq0\right)$
be the standard sub-filtration of $\mathbb{F}$ generated by $W_{t}$.
We will prove the local existence of the velocity $\mathbf{u}:\Omega\times\left(0,\infty\right)\times\mathbf{R}^{d}\rightarrow\mathbf{R}^{d}$
which is $\mathcal{\mathbb{F}}^{W}-$adapted and evolves with $t$
in a weighted Sobolev space according to the following Navier-Stokes
equation with $\epsilon\geq0$ (see Theorem \ref{thm:main_velocity}),
\begin{eqnarray*}
d\mathbf{u}\left(t\right) & = & \left[\mathcal{S}\left(-u^{k}\left(t\right)\partial_{k}\mathbf{u}\left(t\right)\right)+\frac{\epsilon^{2}}{2}\Delta\mathbf{u}\left(t\right)+G\left(t\right)\right]dt\\
\mathbf{u}\left(0\right) & = & \mathbf{u_{0}},\hspace{1em}\text{div }\mathbf{u}=0,\hspace{1em}t>0.
\end{eqnarray*}

The symbol $\mathcal{S}$ stands for the solenoidal projection. We
assume that the initial datum $\mathbf{u}_{0}$ is $\mathcal{F}_{0}^{W}-$measurable,
$\text{div }\mathbf{u}_{0}=0$ and the external forcing term $G:\Omega\times\left(0,\infty\right)\times\mathbf{R}^{d}\rightarrow\mathbf{R}^{d}$
is $\mathcal{\mathbb{F}}^{W}-$adapted. 

Similar to \citep{I}, we define the perturbed flow $\eta:\Omega\times\left(0,\infty\right)\times\mathbf{R}^{d}\rightarrow\mathbf{R}^{d}$
by 
\[
d\mathbf{\eta}\left(t\right)=\mathbf{u}\left(t,\eta\left(t\right)\right)dt+\epsilon dB_{t},\,\eta\left(0\right)=e,\,t>0.
\]

\noindent Here, $e$ denotes the identity map on $\mathbf{R}^{d}.$ 

We derive (see Lemma \ref{lem:flow1} and Remark \ref{rem:formal_flow_derivation})
the following equation for the flow $\eta$, denoting $\mathbf{\kappa}\left(t\right)=\eta^{-1}\left(t\right)$
the spatial inverse of $\eta\left(t\right)$, 
\begin{align*}
d\eta\left(t\right) & =\left.\mathbf{E}\left[\mathcal{S}\left(\nabla\kappa\left(t,z\right)\right)^{\ast}\mathbf{g_{\eta}}\left(t,\kappa\left(t,z\right)\right)|\mathcal{F}_{t}^{W}\right]\right|_{z=\eta\left(t\right)}dt+\epsilon dB_{t}\\
\eta\left(0\right) & =e,\hspace{1em}t>0.
\end{align*}
where
\[
\mathbf{g}_{\eta}\left(t,x\right)=\mathbf{u}_{0}+\int_{0}^{t}\left(\nabla\eta\left(s,x\right)\right)^{\ast}G\left(s,\eta\left(s,x\right)\right)ds,\hspace{1em}\left(\omega,t,x\right)\in\Omega\times\left(0,\infty\right)\times\mathbf{R}^{d}.
\]
Once we prove the local existence of $\eta$, the velocity can be
recovered from the flow via the formula,
\[
\mathbf{u}\left(t\right)=\mathbf{E}\left[\mathcal{S}\left(\nabla\kappa\left(t\right)\right)^{\ast}\mathbf{g}_{\eta}\left(t,\kappa\left(t\right)\right)|\mathcal{F}_{t}^{W}\right].
\]

Lastly, we note that $G$ can be random, although this does not cause
any additional difficulties compared to the case of deterministic
$G$. The fact that $G$ can be $\mathcal{\mathbb{F}}^{W}-$adapted
heuristically allows passing to the limit to the stochastic equations
with a stochastic integral as a forcing term, i.e., $G\left(t\right)dW_{t}$
instead of $G\left(t\right)dt$.

\section{Notation}

We list some commonly used notations in this paper.
\begin{itemize}
\item $\mathbf{E}_{t}^{W}\left(X\right)$ denotes the conditional expectation
$\mathbf{E}\left(X|\mathcal{F}_{t}^{W}\right).$
\item We assume throughout this paper that $d\geq2.$ 
\item $\mathbf{N}_{0}^{d}$ denotes the set of all $d-$dimensional multi-indices.
\item $e$ denotes an identity map from $\mathbf{R}^{d}$ to $\mathbf{R}^{d}.$
$I$ denotes the $d\times d$ identity matrix.
\item For any matrix or vector $A$, $A^{\ast}$ denotes its transpose.
\item $C_{0}^{\infty}=C_{0}^{\infty}\left(\mathbf{R}^{d}\right)$ denotes
the set of all indefinitely differentiable real-valued functions on
$\mathbf{R}^{d}$ with compact support.
\item $\left|\cdot\right|$ denotes standard Euclidean norms for both vectors
and matrices, regardless of dimensions.
\item For a real-valued function $f$ on $\mathbf{R}^{d}$, $\left|f\right|_{\infty}=\text{sup}{}_{x\in\mathbf{R}^{d}}\left|f\left(x\right)\right|.$
It is generalized to vector-valued and matrix-valued functions by
taking the maximum of $\left|\cdot\right|_{\infty}$ of all entries. 
\item For a real-valued function $f$ on $\left[0,\infty\right)\times\mathbf{R}^{d}$,
its partial derivatives are denoted by $\partial_{t}f=\partial f/\partial t,\,$$\partial_{i}f=\partial f/\partial x_{i}$,
$\partial_{ij}^{2}f=\partial^{2}f/\partial x_{i}\partial x_{j},$
$Df=\nabla f=\left(\partial_{1}f,...,\partial_{d}f\right)$. Given
a multi-index $\gamma\in\mathbf{N}_{0}^{d}$, $D^{\gamma}f=\partial^{\gamma}f=\frac{\partial^{\left|\gamma\right|}f}{\partial x_{1}^{\gamma_{1}}...\partial x_{d}^{\gamma_{d}}}$
and the same notations is used for weak derivatives. 
\item For $\mathbf{f}=\left(f^{1},f^{2},...,f^{m}\right)^{\ast}:\mathbf{R}^{d}\rightarrow\mathbf{R}^{m}$
and a multi-index $\gamma\in\mathbf{N}_{0}^{d}$, $D^{\gamma}\mathbf{f}=\left(D^{\gamma}f^{1},...,D^{\gamma}f^{m}\right)^{\ast}$
denotes its partial derivative and $D\mathbf{f}=\nabla\mathbf{f}=\left(\partial_{j}f^{i}\right)_{i,j}$
denotes its Jacobian matrix. The notation $D^{\gamma}\mathbf{f}$
is also extended to a matrix-valued function by entry-wise differentiation.
We will also write $\left\Vert \mathbf{f}\right\Vert =\left\vert \mathbf{f}\left(0\right)\right\vert +\left\vert \nabla\mathbf{f}\right\vert _{\infty}.$ 
\item If $\mathbf{f}$ is a real-valued, vector-valued or matrix-valued
function on $\mathbf{R}^{d}$, we denote $D^{k}\mathbf{f}=\left(D^{\gamma}\mathbf{f}\right)_{\left|\gamma\right|=k},k=1,2,3,...$
the tensor of all derivatives of order $k.$ 
\item $C^{n}\left(\mathbf{R}^{d}\right)=C^{n}\left(\mathbf{R}^{d};\mathbf{R}\right),\,n\geq0$
denotes the set of all $n$-times continuously differentiable functions
on $\mathbf{R}^{d}$ endowed with the finite norm $\left|f\right|_{C^{n}}=\sum_{0\leq\left|\gamma\right|\leq n}\sup_{x}\left|D^{\gamma}f\left(x\right)\right|<\infty.$ 
\item $\mathcal{C}^{\alpha}\left(\mathbf{R}^{d}\right)=\mathcal{C}^{\alpha}\left(\mathbf{R}^{d};\mathbf{R}\right),\,\alpha>0$
denotes the standard H\textcyrillic{ӧ}lder spaces on $\mathbf{R}^{d}$
endowed with the finite norm
\[
\left|f\right|_{\mathcal{C}^{\alpha}}=\left|f\right|_{C^{n}}+\sup_{x\neq y}\frac{\left|f\left(x\right)-f\left(y\right)\right|}{\left|x-y\right|^{\beta}}
\]
where $\alpha=n+\beta,$ $n$ is an integer and $\beta\in\left(0,1\right].$
\item $C^{n}$ (resp. $\mathcal{C}^{n}$) is extended to the space vector-valued
and matrix-valued functions whose all components belong to $C^{n}$
(resp. $\mathcal{C}^{n}$.) It is endowed with the the maximum of
$C^{n}-$norm (resp. $\mathcal{C}^{n}-$norm) of all entries. The
same notation $\left|f\right|_{C^{n}}$ (resp. $\left|f\right|_{\mathcal{C}^{n}}$)
are used. For the spaces, we will write $C^{n}\left(\mathbf{R}^{d};B\right)$
and $\mathcal{C}^{n}\left(\mathbf{R}^{d};B\right)$ with appropriate
$B$, e.g., $B=\mathbf{R}^{d}.$
\item For a multi-linear continuous operator $F:E\rightarrow F$ where $E,F$
are Banach spaces, $\left\Vert F\right\Vert $ denotes its operator
norm.
\item $C,N,M\in\left(0,\infty\right)$ with or without subscriptions denote
constants which generally change from line to line or even within
the same line. 
\item We will use the Einstein summation convention over repeated indices
when there is no chance of confusion.
\item When two functions are equal almost everywhere, we generally refer
to them by the same notation. If some properties of a function hold
after a modification on a set of measure zero, then we simply say
that a function satisfies such properties. In particular, if a function
is H\textcyrillic{ӧ}lder continuous or continuously differentiable
after a modification on a set of measure zero due to Sobolev embedding
theorem, then we simply say that it has such properties. 
\end{itemize}

\section{Function Spaces and Decomposition of Vector Fields}

\subsection{Function Spaces}

We will use Sobolev function spaces with asymptotic conditions originally
introduced by Cantor in \citep{c1} and were used in \citep{c2} to
prove the local well-posedness of the deterministic Euler equations. 

For $p>1,$ $l\geq0$, $\delta\in\mathbf{R}$, we denote by $H_{\delta,p}^{l}\left(\mathbf{R}^{d}\right)=H_{\delta,p}^{l}\left(\mathbf{R}^{d};\mathbf{R}\right)$
the space of real-valued functions $f:\mathbf{R}^{d}\rightarrow\mathbf{R}$
whose weak derivatives\textcolor{red}{{} }have the finite norm 

\[
\left|f\right|_{H_{\delta,p}^{l}}=\sum_{k=0}^{l}\sum_{\left|\gamma\right|=k}\left(\int_{\mathbf{R}^{d}}w^{p\left(\delta-l+k-d/p\right)}\left(x\right)\left|D^{\gamma}f\left(x\right)\right|^{p}dx\right)^{1/p}
\]

\noindent where $w\left(x\right)=\left(1+\left|x\right|^{2}\right)^{1/2}$.
By interpolation inequalities, the norm $\left|f\right|_{H_{\delta,p}^{l}}$
is equivalent to the norm, 
\[
\left(\int_{\mathbf{R}^{d}}w^{p\left(\delta-l-d/p\right)}\left(x\right)\left|f\left(x\right)\right|^{p}dx\right)^{1/p}+\sum_{\left|\gamma\right|=l}\left(\int_{\mathbf{R}^{d}}w^{p\left(\delta-d/p\right)}\left(x\right)\left|D^{\gamma}f\left(x\right)\right|^{p}dx\right)^{1/p}.
\]

It is easy to show that $C_{0}^{\infty}\left(\mathbf{R}^{d}\right)$
is dense in $H_{p,\delta}^{l}\left(\mathbf{R}^{d}\right)$ (e.g.,
\citep[Proposition 2.3.1]{s}.) If $\mathbf{v}$ is a vector, a matrix
or even a multi-dimensional tensor, the norm $\left|\mathbf{v}\right|_{H_{\delta,p}^{l}}$
is similarly defined by intrepreting $\left|\cdot\right|$ as the
Euclidean norm. The corresponding spaces are denoted with respect
to the dimensions of the range spaces, for example, $H_{\delta,p}^{l}\left(\mathbf{R}^{d};\mathbf{R}^{m}\right)$
or $H_{\delta,p}^{l}\left(\mathbf{R}^{d};\mathbf{R}^{m}\times\mathbf{R}^{n}\right).$

For $0<T\leq\infty$, we denote by $\sum_{\delta,p}^{l}\left(T\right)$
the space of $H_{\delta,p}^{l}-$valued functions $\mathbf{v}$ on
$\left[0,T\right]$ with the finite norm $\sup_{t\in\left[0,T\right]}\left|\mathbf{v}\left(t\right)\right|_{H_{\delta,p}^{l}}.$
We use the same notation $\sum_{\delta,p}^{l}\left(T\right)$ regardless
of dimensions as there is no chance of confusion.

If $\delta=0$, we write $H_{p}^{l}\left(\mathbf{R}^{d};B\right)=H_{0,p}^{l}\left(\mathbf{R}^{d};B\right)$
and if $\delta=l=0$, we write $L_{p}\left(\mathbf{R}^{d};B\right)=H_{0,p}^{0}\left(\mathbf{R}^{d};B\right)$
with an appropriate $B.$ 

We use $\left|\cdot\right|_{p}$ to denote the $L_{p}$ norms regardless
of dimensions. The estimates of types $\left|fg\right|_{p}\leq C\left|f\right|_{\infty}\left|g\right|_{p}$
and $\left|fg\right|_{\infty}\leq C\left|f\right|_{\infty}\left|g\right|_{\infty}$
with some generic $C$ will be used abundantly in this paper. We will
not be pedantic about the value of $C$ even when $C=1$, since it
depends on exact dimensions of $f$ and $g.$

Whenever $p>d$, we will interpret all derivatives of $f\in H_{p}^{l}\left(\mathbf{R}^{d}\right)$
and thus of $f\in H_{\delta,p}^{l}\left(\mathbf{R}^{d}\right)$ when
$\delta\geq l+\frac{d}{p}$ as classical derivatives (cf. \citep[Theorem 5 of Chapter 5]{Evans}.)
Consequently, we may rely on results in fundamental calculus such
as chain rules, product rules, Taylor's expansion, and integration
by-parts. 

\subsection{Decomposition of Vector Fields}

\noindent Recall the definition of the Newtonian potential, 
\[
\Gamma\left(x\right)=\Gamma^{d}\left(\left\vert x\right\vert \right)=\begin{cases}
\frac{\left\vert x\right\vert ^{2-d}}{d\left(2-d\right)\omega_{d}}, & d>2\\
\frac{1}{2\pi}\ln\left\vert x\right\vert , & d=2
\end{cases}
\]

\noindent and 
\[
\Gamma_{i}\left(x\right)=\frac{\partial}{\partial x_{i}}\Gamma\left(x\right)=\frac{1}{d\omega_{d}}\frac{x_{i}}{\left\vert x\right\vert }\left\vert x\right\vert ^{1-d},\,x\neq0,\hspace{1em}d\geq2
\]

\noindent where $\omega_{d}$ is the volume of the unit ball in $\mathbf{R}^{d}$.

Define the operators
\[
T_{i}f\left(x\right)=\int_{\mathbf{R}^{d}}\Gamma_{i}\left(x-y\right)f\left(y\right)dy,\,f\in C_{0}^{\infty}\left(\mathbf{R}^{d}\right),\,i=1,\ldots,d.
\]
We will use $T_{i},i=1,...,d$ to define gradient and solenoidal projections
for $\mathbf{v}\in H_{\theta+l,p}^{l}\left(\mathbf{R}^{d};\mathbf{R}^{d}\right)$
respectively. First, we set for $\mathbf{v}=\left(v^{1},....,v^{d}\right)^{\ast}\in C_{0}^{\infty}\left(\mathbf{R}^{d};\mathbf{R}^{d}\right),$
\begin{eqnarray*}
\mathcal{G}\left(\mathbf{v}\right) & = & \nabla T_{i}\left(v^{i}\right)^{\ast},\\
\mathcal{S}\left(\mathbf{v}\right) & = & \mathbf{v}-\mathcal{G}\left(\mathbf{v}\right)
\end{eqnarray*}
where the standard summation convention over repeated indices is assumed.
It is well-known that
\[
\mathcal{G}\left(\mathbf{v}\right)=\nabla T_{i}\left(v^{i}\right)^{\ast}=-RR_{j}v^{j},
\]
where $R_{j}f=-i\mathcal{F}^{-1}\left(\frac{\xi_{j}}{\left\vert \xi\right\vert }\mathcal{F}f\right)$
is the Riesz transform of $f$ and $R=\left(R_{1},...,R_{d}\right)^{\ast}.$
Usually, $\mathcal{G}\left(\mathbf{v}\right)$ and $\mathcal{S}\left(\mathbf{v}\right)$
are referred to as gradient and solenoidal projections of the vector
field $\mathbf{v}$ respectively, and
\[
\int\mathcal{G}\left(\mathbf{v}\right)\cdot\mathcal{S}\left(\mathbf{f}\right)dx=0,\,\mathbf{v\text{,}\,\mathbf{f}\in}C_{0}^{\infty}\left(\mathbf{R}^{d};\mathbf{R}^{d}\right).
\]

In fact, $\mathcal{G}\left(\mathbf{v}\right)=\nabla T_{i}\left(v^{i}\right)^{\ast}$
and $\mathcal{S}\left(\mathbf{v}\right)=\mathbf{v}-\mathcal{G}\left(\mathbf{v}\right)$
are continuous in $L_{p}\left(\mathbf{R}^{d};\mathbf{R}^{d}\right)$
i.e. $\left|\mathcal{G}\left(\mathbf{v}\right)\right|_{p}\leq C\left|\mathbf{v}\right|_{p}$
and $\left|\mathcal{S}\left(\mathbf{v}\right)\right|_{p}\leq C\left|\mathbf{v}\right|_{p}$
(see \citep[Remark 3.5]{M2}.) For more detailed discussion on projections
in non-weighted Sobolev spaces see for instance \citep[Section 3.1.2]{rm},
\citep[Section 3.2]{M2} and references therein. The following Helmholtz
decomposition for weighted Sobolev spaces is a key result for our
main proof. 
\begin{lem}
\label{lem:mik1_smoothing_decomp}Let $p>1,\,l\geq0,\,\theta\in\left(1,d\right)$.
The operators 
\[
T_{i}:H_{\theta+l,p}^{l}\left(\mathbf{R}^{d}\right)\rightarrow H_{\theta+l,p}^{l+1}\left(\mathbf{R}^{d}\right),\hspace{1em}i=1,...,d
\]
are bounded. That is for any $i=1,\ldots,d,$ there exists $C>0$
such that 
\[
\left|T_{i}f\right|_{H_{\theta+l,p}^{l+1}}\leq C\left\vert f\right\vert _{H_{\theta+l,p}^{l}},\,f\in H_{\theta+l,p}^{l}\left(\mathbf{R}^{d}\right).
\]

\noindent Consequently, 
\[
\mathcal{G}:H_{\theta+l,p}^{l}\left(\mathbf{R}^{d};\mathbf{R}^{d}\right)\rightarrow H_{\theta+l,p}^{l}\left(\mathbf{R}^{d};\mathbf{R}^{d}\right),\,\mathcal{S}:H_{\theta+l,p}^{l}\left(\mathbf{R}^{d};\mathbf{R}^{d}\right)\rightarrow H_{\theta+l,p}^{l}\left(\mathbf{R}^{d};\mathbf{R}^{d}\right)
\]

\noindent are linear continuous. Moreover,
\[
H_{\theta+l,p}^{l}\left(\mathbf{R}^{d};\mathbf{R}^{d}\right)=\mathcal{G}\left(H_{\theta+l,p}^{l}\left(\mathbf{R}^{d};\mathbf{R}^{d}\right)\right)\oplus\mathcal{S}\left(H_{\theta+l,p}^{l}\left(\mathbf{R}^{d};\mathbf{R}^{d}\right)\right)
\]
 and $\mathcal{S}\left(H_{\theta+l,p}^{l}\left(\mathbf{R}^{d};\mathbf{R}^{d}\right)\right)=\left\{ \mathbf{v}\in H_{\theta+l,p}^{l}\left(\mathbf{R}^{d};\mathbf{R}^{d}\right)|\hspace{1em}\text{div }\mathbf{v}=0\right\} .$
\end{lem}
\begin{proof}
The first estimate is an immediate consequence of Lemma \ref{Lem:lle3}
whose proof is provided fully in the Appendix. Regarding the direct
sum and divergence-free vector fields, we refer to \citep[Lemma 3.7]{M2}
and its proof which clearly carry over to weighted Sobolev spaces.
\end{proof}
\begin{rem}
\label{rem:sol_0}The solonoidal projection has a convenient formula
in weighted Sobolev spaces. Indeed, if $f\in H_{\theta+l,p}^{l}\left(\mathbf{R}^{d};\mathbf{R}^{d}\right)$
for some $p>1,\,l\geq0,\,\theta\in\left(1,d\right)$ and $u^{ij}=\partial_{i}N\left(f^{j}\right)$,
then by Lemma \ref{Lem:lle3}, $u\in H_{\theta+l,p}^{l+1}\left(\mathbf{R}^{d};\mathbf{R}^{d}\times\mathbf{R}^{d}\right).$
Following \citep[ Proposition 2]{MV-1}, we let
\[
\mathcal{G}f=\sum_{i}\nabla T_{i}\left(f^{i}\right)^{\ast}=\sum_{i}\nabla\text{\ensuremath{\left(u^{ii}\right)}}^{\ast},\,\left(\tilde{\mathcal{S}}f\right)^{j}=\sum_{i}\frac{\partial}{\partial x_{i}}\left(u^{ij}-u^{ji}\right),\,1\leq j\leq d.
\]
By passing to the limit, $f^{j}=\Delta N\left(f^{j}\right)=\sum_{i}\partial_{i}u^{ij}=\left(\mathcal{\tilde{S}}f\right)^{j}+\left(\mathcal{G}f\right)^{j}.$
Therefore, $\mathcal{S}f=\mathcal{\tilde{S}}f$ and 
\begin{align*}
\left(\mathcal{S}f\right)^{j} & =\sum_{i}\frac{\partial}{\partial x_{i}}\left(\partial_{i}\left(N\left(f^{j}\right)\right)-\partial_{j}\left(N\left(f^{i}\right)\right)\right).
\end{align*}
In particular, if $f=\nabla p$ where $p\in H_{\theta+l,p}^{l+1}\left(\mathbf{R}^{d}\right)$
is a scalar, then $\mathcal{S}f=0.$ 
\end{rem}

\section{Schauder Ring Properties}

We now establish slightly modified Schauder ring properties. All parameters
are assumed to be non-negative. 
\begin{lem}
\label{lem:schauringl}(cf. \citep[Theorem 4.39]{AF}) Let $p>1,\,l>\frac{d}{p},\,m+m^{\prime}\leq l.$
Then there exists $C>0$ such that for all $u\in H_{p}^{l-m}\left(\mathbf{R}^{d}\right),v\in H_{p}^{l-m^{\prime}}\left(\mathbf{R}^{d}\right),$

\[
\int\left|u\left(x\right)v\left(x\right)\right|{}^{p}dx\leq C\left|u\right|{}_{H_{p}^{l-m}}^{p}\left|v\right|{}_{H_{p}^{l-m^{\prime}}}^{p}.
\]
\end{lem}
\begin{proof}
\noindent We follow the proof of \citep[Theorem 4.39]{AF}. According
to which, we have the following embedding.

\noindent (i) Let $lp\leq d$ and $p\leq r\leq\frac{dp}{d-lp}$ (or
$p\leq r<\infty$ if $lp=d.$) Then there is a constant $C$ such
that for all $g\in H_{p}^{l}\left(\mathbf{R}^{d}\right),$
\[
\int\left|g\left(x\right)\right|{}^{r}dx\leq C\left|g\right|{}_{H_{p}^{l}}^{r}.
\]

\noindent (ii) Let $lp>d$. Then there is a constant $C$ such that
for all $g\in H_{p}^{l}\left(\mathbf{R}^{d}\right)$ and $dx$-a.s.,
\[
\left|g\left(x\right)\right|\leq C\left|g\right|{}_{H_{p}^{l}}.
\]

\noindent Hence, if $\left(l-m\right)p>d$, then
\[
\int\left|u\left(x\right)v\left(x\right)\right|{}^{p}dx\leq C\left|u\right|{}_{H_{p}^{l-m}}^{p}\left|v\right|{}_{p}^{p}.
\]

\noindent Similarly, if $\left(l-m^{\prime}\right)p>d$, then
\[
\int\left|u\left(x\right)v\left(x\right)\right|{}^{p}dx\leq C\left|u\right|{}_{p}^{p}\left|v\right|{}_{H_{p}^{l-m^{\prime}}}^{p}.
\]
If both $\left(l-m\right)p\leq d$ and $\left(l-m^{\prime}\right)p\leq d$,
noting that we always have $\left(l-m+l-m^{\prime}\right)p>d$, then
\[
\frac{d-\left(l-m\right)p}{d}+\frac{d-\left(l-m^{\prime}\right)p}{d}<1,
\]
and there exist positive numbers $r,r^{\prime}>1$ such that $\frac{1}{r}+\frac{1}{r^{\prime}}=1$,
\begin{eqnarray*}
p & \leq & rp<\frac{dp}{d-\left(l-m\right)p},\\
p & \leq & r^{\prime}p<\frac{dp}{d-\left(l-m^{\prime}\right)p}.
\end{eqnarray*}
Finally, we obtain by applying (i),
\begin{eqnarray*}
\int\left|u\left(x\right)v\left(x\right)\right|{}^{p}dx & \leq & \left(\int\left|u\left(x\right)\right|{}^{rp}dx\right){}^{1/r}\left(\int\left|v\left(x\right)\right|{}^{r^{\prime}p}dx\right){}^{1/r^{\prime}}\\
 & \leq & C\left|u\right|{}_{H_{p}^{l-m}}^{p}\left|v\right|{}_{H_{p}^{l-m^{\prime}}}^{p}.
\end{eqnarray*}

\noindent The proof is complete.
\end{proof}
\begin{cor}
\label{cor-product}Let $p>1,\,l>\frac{d}{p},\,k=k_{1}+\ldots+k_{N}\leq l.$
Then there exists $C>0$ such that for all $u_{i}\in H_{p}^{l-k_{i}}\left(\mathbf{R}^{d}\right),i=1,...,N,$
the product $\prod_{i=i}^{N}u_{i}\in H_{p}^{l-k}\left(\mathbf{R}^{d}\right)$
and
\[
\left|\prod_{i=i}^{N}u_{i}\right|_{H_{p}^{l-k}}\leq C\prod_{i=1}^{N}\left|u_{i}\right|{}_{H_{p}^{l-k_{i}}}.
\]
\end{cor}
\begin{proof}
Let $N=2,$ and $\mu,\mu^{\prime}\in\mathbf{N}_{0}^{d}$ be multi-indices
so that $\left\vert \mu\right\vert +\left\vert \mu^{\prime}\right\vert \leq l-k$.
Since $D^{\mu}u_{1}\in H_{p}^{l-k_{1}-\left\vert \mu\right\vert }\left(\mathbf{R}^{d}\right),D^{\mu^{\prime}}u_{2}\in H_{p}^{l-k_{2}-|\mu^{\prime}|}\left(\mathbf{R}^{d}\right)$
and $k_{1}+\left|\mu\right|+k_{2}+|\mu^{\prime}|\leq l$ , we have
by Lemma \ref{lem:schauringl},
\[
\left|D^{\mu}u_{1}D^{\mu^{\prime}}u_{2}\right|{}_{p}\leq C\left|D^{\mu}u_{1}\right|{}_{H_{p}^{l-k_{1}-|\mu|}}\left|D^{\mu^{\prime}}u_{2}\right|{}_{H_{p}^{l-k_{2}-|\mu^{\prime}|}}\leq C\left|u_{1}\right|{}_{H_{p}^{l-k_{1}}}\left|u_{2}\right|{}_{H_{p}^{l-k_{2}}}.
\]
The statement follows by induction.
\end{proof}
Corollary \ref{cor-product} can be generalized to weighted Sobolev
spaces.
\begin{cor}
\label{cor3:product_w}Let $p>1,\,l>\frac{d}{p},\,k=k_{1}+\ldots+k_{N}\leq l$
and $\delta\leq\delta_{1}+\ldots+\delta_{N}-\left(N-1\right)\frac{d}{p}$.
Then there exists $C>0$ such that for all $u_{i}\in H_{\delta_{i}+l,p}^{l-k_{i}}\left(\mathbf{R}^{d}\right),i=1,\ldots,N,$
the product $\prod_{i=i}^{N}u_{i}\in H_{\delta+l,p}^{l-k}\left(\mathbf{R}^{d}\right)$
and
\[
\left|\prod_{i=i}^{N}u_{i}\right|_{H_{\delta+l,p}^{l-k}}\leq C\prod_{i=1}^{N}\left|u_{i}\right|{}_{H_{\delta_{i}+l,p}^{l-k_{i}}}.
\]
In particular, if $\delta_{1}=\ldots=\delta_{N}=\delta\geq\frac{d}{p}$,
then $\delta_{1}+\ldots+\delta_{N}-\left(N-1\right)\frac{d}{p}=\delta+\left(N-1\right)\left(\delta-\frac{d}{p}\right)\geq\delta$
and hence,
\[
\left|\prod_{i=i}^{N}u_{i}\right|_{H_{\delta+l,p}^{l-k}}\leq C\prod_{i=1}^{N}\left|u_{i}\right|{}_{H_{\delta+l,p}^{l-k_{i}}}.
\]
If, in addition, $k_{i}=0$ for all $i$, then
\[
\left|\prod_{i=i}^{N}u_{i}\right|_{H_{\delta+l,p}^{l}}\leq C\prod_{i=1}^{N}\left|u_{i}\right|{}_{H_{\delta+l,p}^{l}}.
\]
\end{cor}
\begin{proof}
\noindent Indeed, for any multi-index $\gamma\in\mathbf{N}_{0}^{d}$
such that $\left\vert \gamma\right\vert \leq l-k,$ 
\[
w^{\delta+k+\left|\gamma\right|-d/p}D^{\gamma}\left(\prod_{i=1}^{N}u_{i}\right)=w^{\delta-\left(\sum_{i}\delta_{i}-\left(N-1\right)d/p\right)}\sum_{\mu_{1}+\ldots+\mu_{N}=\gamma}\prod_{i=1}^{N}w^{\delta_{i}+k_{i}+\left\vert \mu_{i}\right\vert -d/p}D^{\mu_{i}}u_{i}.
\]

\noindent Since $D^{\mu_{i}}u_{i}\in H_{\delta_{i}+l,p}^{l-k_{i}-\left|\mu_{i}\right|}\left(\mathbf{R}^{d}\right)$
, we have $w^{\delta_{i}+k_{i}+\left\vert \mu_{i}\right\vert -d/p}D^{\mu_{i}}u_{i}\in H_{p}^{l-k_{i}-\left|\mu_{i}\right|}\left(\mathbf{R}^{d}\right)$
and
\[
\left|w^{\delta_{i}+k_{i}+\left|\mu_{i}\right|-d/p}D^{\mu_{i}}u_{i}\right|{}_{H_{p}^{l-k_{i}-\mu_{i}}}\leq C\left|D^{\mu_{i}}u_{i}\right|{}_{H_{\delta_{i}+l,p}^{l-k_{i}-\left|\mu_{i}\right|}}\leq C\left|u_{i}\right|{}_{H_{\delta_{i}+l,p}^{l-k_{i}}}.
\]
The statement follows by Corollary \ref{cor-product}.
\end{proof}
Clearly, Corollary \ref{cor-product} and Corollary \ref{cor3:product_w}
can be extended to multiplication between matrices. We have the following
estimate of function composition. We note that all derivatives are
interpreted as classical since $p>d.$
\begin{lem}
\noindent\label{lem:compose} Let $p>d,\,l\geq0,$$\,\delta,\delta^{\prime}\geq\frac{d}{p},\,N>0.$
Then there exists $C>0$ such that for all $f\in H_{\delta+l,p}^{l}\left(\mathbf{R}^{d}\right)$;
and $g:\mathbf{R}^{d}\rightarrow\mathbf{R}^{d}$ a diffeomorphism
with $\left|\det\nabla\left(g^{-1}\right)\right|_{\infty}+\left\Vert g^{-1}\right\Vert +\chi_{l\geq1}\left|\nabla g\right|_{\infty}+\chi_{l\geq2}\left|D^{2}g\right|_{H_{\delta^{\prime}+l-1,p}^{l-2}}\leq N,$
\begin{align*}
 & \left|f\circ g\right|_{H_{\delta+l,p}^{l}}\\
 & \leq C\left(1+\left\Vert g^{-1}\right\Vert \right)^{\delta-d/p+l}\left|f\right|_{H_{\delta+l,p}^{l}}\left(1+\chi_{l\geq1}\left|\nabla g\right|_{\infty}+\chi_{l\geq2}\left|D^{2}g\right|_{H_{\delta^{\prime}+l-1,p}^{l-2}}\right)^{l}.
\end{align*}
\end{lem}
\begin{proof}
\noindent We first mention a simple estimate,
\[
\frac{w\left(g^{-1}\left(x\right)\right)}{w\left(x\right)}=\frac{\left(1+\left|g^{-1}\left(x\right)\right|^{2}\right){}^{1/2}}{\left(1+\left|x\right|^{2}\right)^{1/2}}\leq1+\left\Vert g^{-1}\right\Vert ,\,\left\Vert g^{-1}\right\Vert =\left\vert g^{-1}\left(0\right)\right\vert +\left\vert \nabla g^{-1}\right\vert _{\infty}.
\]

\noindent If $l=0,$ by changing the variable of integration, 
\[
\left|f\circ g\right|_{H_{\delta,p}^{0}}=\left|w^{\delta-d/p}f\left(g\right)\right|_{p}\leq\left(1+\left\Vert g^{-1}\right\Vert \right)^{\delta-d/p}\left|w^{\delta-d/p}f\right|_{p}.
\]

\noindent For a multi-index $\gamma\in\mathbf{N}_{0}^{d}$ with $\left|\gamma\right|=l\geq1$,
$w^{\delta-d/p+l}D^{\gamma}\left(f\circ g\right)$ is a summation
of terms in the form of
\begin{align*}
\mathcal{A}= & w^{\delta-d/p+m}D^{\mu}f\left(g\right)\prod_{i=1}^{m}w^{\left|\mu_{i}\right|-1}D^{\mu_{i}}g^{a_{i}}
\end{align*}

\noindent where $\mu_{1}+\ldots+\mu_{m}=\gamma,\left|\mu_{i}\right|\geq1\text{\text{ for }}i=1,...,m,\left\vert \mu\right\vert =m,1\leq m\leq l$,
and $1\leq a_{i}\leq d$ are component indices. 

\noindent If $\left|\mu_{i}\right|=1$ for all $i=1,...,m$ then 
\[
\left|\mathcal{A}\right|_{p}\leq C\left(1+\left\Vert g^{-1}\right\Vert \right)^{\delta-d/p+m}\left|w^{\delta-d/p+m}D^{\mu}f\right|_{p}\left|\nabla g\right|_{\infty}^{m}.
\]

\noindent We may now assume that $l\geq2$ and $\left|\mu_{i}\right|\geq2$
for some $i.$ Since $\left(l-m\right)p>d$, by Sobolev embedding
theorem for $f$ and Corollary \ref{cor-product}, 
\begin{align*}
\left|\mathcal{A}\right|_{p} & \leq C\left(1+\left\Vert g^{-1}\right\Vert \right)^{\delta-d/p+m}\left|w^{\delta-d/p+m}D^{\mu}f\right|_{\infty}\prod_{i=1,\left|\mu_{i}\right|=1}^{m}\left|\nabla g^{a_{i}}\right|_{\infty}\left|\prod_{i=1,\left|\mu_{i}\right|\geq2}^{m}w^{\left|\mu_{i}\right|-1}D^{\mu_{i}}g^{a_{i}}\right|_{p}\\
 & \leq C\left(1+\left\Vert g^{-1}\right\Vert \right)^{\delta-d/p+m}\left|f\right|_{H_{\delta+l,p}^{l}}\prod_{i=1,\left|\mu_{i}\right|=1}^{m}\left|\nabla g^{a_{i}}\right|_{\infty}\prod_{i=1,\left|\mu_{i}\right|\geq2}^{m}\left|D^{\mu_{i}}g^{a_{i}}\right|_{H_{d/p+l-1,p}^{l-\left|\mu_{i}\right|}}.
\end{align*}

\noindent The proof is complete. 
\end{proof}
Examining the proof above the restriction $\delta\geq\frac{d}{p}$
can be easily relaxed as follows:
\begin{cor}
\label{cor:compose-neg} Let $p>d,\,l\geq0,$$\,\delta\geq0,\,\delta^{\prime}\geq\frac{d}{p},\,N>0.$
Then there exists $C>0$ such that for all $f\in H_{\delta+l,p}^{l}\left(\mathbf{R}^{d}\right)$
; and $g:\mathbf{R}^{d}\rightarrow\mathbf{R}^{d}$ a diffeomorphism
with $\left|\det\nabla\left(g^{-1}\right)\right|_{\infty}+\left\Vert g\right\Vert +\left\Vert g^{-1}\right\Vert +\chi_{l\geq1}\left|\nabla g\right|_{\infty}+\chi_{l\geq2}\left|D^{2}g\right|_{H_{\delta^{\prime}+l-1,p}^{l-2}}\leq N,$
\begin{align*}
 & \left|f\circ g\right|_{H_{\delta+l,p}^{l}}\\
 & \leq C\left(1+\left\Vert g\right\Vert +\left\Vert g^{-1}\right\Vert \right)^{\left|\delta-d/p\right|+l}\left|f\right|_{H_{\delta+l,p}^{l}}\left(1+\chi_{l\geq1}\left|\nabla g\right|_{\infty}+\chi_{l\geq2}\left|D^{2}g\right|_{H_{\delta^{\prime}+l-1,p}^{l-2}}\right)^{l}.
\end{align*}
\end{cor}
Clearly, Lemma \ref{lem:compose} and Corollary \ref{cor:compose-neg}
also hold if $f$ is a vector or a matrix.

\section{Flow Representation and Main Results}

In this section, we discuss flow representation of the Navier-Stokes
equations and state our main results. We will assume that the prescribed
fields $\mathbf{u}_{0}$ and $G$ satisfy the following assumption
with $l\geq2.$

\noindent$\textbf{Assumption }F\left(l\right).$

\noindent (i) $\mathbf{u}_{0}$ is $\mathcal{F}_{0}^{W}-$measurable.
For all $\omega\in\Omega$, $\mathbf{u}_{0}$ is divergence-free,
and $\mathbf{u}_{0}\in C^{2}\left(\mathbf{R}^{d};\mathbf{R}^{d}\right)\cap H_{\theta+l-2,p}^{l-2}\left(\mathbf{R}^{d};\mathbf{R}^{d}\right).$

\noindent (ii) $G$ is $\mathbb{F}^{W}-$adapted. For all $\left(\omega,t\right)\in\Omega\times\left[0,\infty\right)$,
$G\left(t\right)$ is divergence-free and $G\left(t\right)\in H_{\theta+l-2,p}^{l-2}\left(\mathbf{R}^{d};\mathbf{R}^{d}\right)$.
For all $\omega\in\Omega$, $G\in C\left(\left[0,\infty\right),C^{2}\left(\mathbf{R}^{d};\mathbf{R}^{d}\right)\right).$ 

\subsection{Flow Representation}

Let $\left(\Omega,\mathcal{F},\mathbf{P}\right)$ be a complete probability
space with a filtration $\mathbb{F}$ of $\sigma$\textminus algebras
$\left(\mathcal{F}_{t},t\geq0\right)$ satisfying the usual conditions
and $B_{t},W_{t}$ be independent standard $d-$dimensional Wiener
processes on $\left(\Omega,\mathcal{F},\mathbf{P}\right)$. Let $\mathbb{F}^{W}=\left(\mathcal{F}_{t}^{W},t\geq0\right)$
be the standard sub-filtration of $\mathbb{F}$ generated by $W_{t}$.
We consider the following Navier-Stokes equations with a random forcing
term $G$, 
\begin{align}
d\mathbf{u}\left(t\right) & =\left[\mathcal{S}\left(-u^{k}\left(t\right)\partial_{k}\mathbf{u}\left(t\right)\right)+\frac{\varepsilon^{2}}{2}\Delta\mathbf{u}\left(t\right)+G\left(t\right)\right]dt\label{eq:NSE}\\
\mathbf{u}\left(0\right) & =\mathbf{u_{0}},\hspace{1em}\text{div }\mathbf{u}\left(t\right)=0,\hspace{1em}t>0.\nonumber 
\end{align}

\noindent We now formulate the flow representation of (\ref{eq:NSE}).
Note that $\mathbf{u}_{0}$ and $G$ have sufficient regularity thanks
to Assumption \textbf{$F\left(l\right)$} to make the computation
rigorous. To make the statement less cumbersome, we define some notations
for the next Lemma. For a smooth vector field $\mathbf{u}$, we let
the perturbed flow $\eta\left(t\right):\mathbf{R}^{d}\rightarrow\mathbf{R}^{d},\,t>0$
be given by
\[
d\eta\left(t\right)=\mathbf{u}\left(t,\eta\left(t\right)\right)dt+\varepsilon B_{t},\,\mathbf{\eta}\left(0\right)=e,\,t>0.
\]
and let $\kappa\left(t\right)=\eta^{-1}\left(t\right)$ be its spatial
inverse whenever it is well-defined. Also, we let
\[
\mathbf{g}_{\eta}\left(t,x\right)=\mathbf{u}_{0}+\int_{0}^{t}\left(\nabla\eta\left(s,x\right)\right)^{\ast}G\left(s,\eta\left(s,x\right)\right)ds,\,\left(\omega,t,x\right)\in\Omega\times\left(0,\infty\right)\times\mathbf{R}^{d}.
\]

\begin{lem}
\noindent\label{lem:flow1} Let $p>d,\,l\geq2,\,\theta\in\left(1,d\right),\,\alpha\in\left(0,1\right],\,N_{1},N_{2}>0$
and $F\left(l\right)$ holds. Let $\mathbf{u}:\Omega\times\left[0,T\right]\times\mathbf{R}^{d}\rightarrow\mathbf{R}^{d}$
be $\mathbb{F}^{W}-$adapted such that 
\[
\sup_{\left(\omega,t\right)\in\Omega\times\left[0,T\right]}\left|\mathbf{u}\left(t\right)\right|_{\mathcal{C}^{3+\alpha}}\leq N_{1}.
\]
 If (i) $\left(\nabla\mathbf{\kappa}\left(t\right)\right)^{\ast}\mathbf{g}_{\eta}\left(t,\mathbf{\kappa}\left(t\right)\right)\in H_{\theta+l,p}^{l}\left(\mathbf{R}^{d};\mathbf{R}^{d}\right)$
for all $\left(\omega,t\right)\in\Omega\times\left[0,T\right]$ and 

\noindent (ii) $\mathbf{E}\left|\mathcal{S}\left[\left(\nabla\mathbf{\kappa}\left(t\right)\right)^{\ast}\mathbf{g}_{\eta}\left(t,\mathbf{\kappa}\left(t\right)\right)\right]\right|_{C^{2}}\leq N_{2}$
for all $t\in\left[0,T\right],$

\vspace{10pt}

\noindent then for $P-$a.s., $\mathbf{y}\left(t\right)=\mathbf{E}_{t}^{W}\mathcal{S}\left[\left(\nabla\mathbf{\kappa}\left(t\right)\right)^{\ast}\mathbf{g}_{\eta}\left(t,\mathbf{\kappa}\left(t\right)\right)\right]$
solves 
\begin{align}
d\mathbf{y}\left(t\right)= & \mathcal{S}\left[-u^{k}\left(t\right)\partial_{k}\mathbf{y}\left(t\right)-\left(\nabla\mathbf{u}\left(t\right)\right)^{\ast}\mathbf{y}\left(t\right)\right]dt+\left[\frac{\varepsilon^{2}}{2}\Delta\mathbf{y}\left(t\right)+G\left(t\right)\right]dt\label{eq:LinSNSE}\\
\mathbf{y}\left(0\right)= & \mathbf{u}_{0},\hspace{1em}\text{div }\mathbf{y}\left(t\right)=0,\hspace{1em}t\in\left(0,T\right]\nonumber 
\end{align}

\noindent as an equality in $H_{\theta+l-2,p}^{l-2}\left(\mathbf{R}^{d};\mathbf{R}^{d}\right).$
\end{lem}
\begin{rem}
We will justify in the proof that $\kappa$ is well-defined. For clarity,
we note that $\mathbf{u}_{0}$ satisfies both the above conditions
and assumption $F\left(l\right)$.
\end{rem}
\begin{proof}
\noindent We consider the perturbed flow $\eta\left(t\right):\mathbf{R}^{d}\rightarrow\mathbf{R}^{d}$
given by
\begin{align}
d\eta\left(t\right) & =\mathbf{u}\left(t,\eta\left(t\right)\right)dt+\varepsilon B_{t}\label{eq:d2}\\
\eta\left(0\right) & =e,\hspace{1em}t\in\left(0,T\right].\nonumber 
\end{align}

\noindent According to \citep[Theorem 2.1 and 2.4]{LM}, the classical
solution $\eta\left(t\right)$ of (\ref{eq:d2}) and its spatial inverse
$\mathbf{\kappa}\left(t\right)=\mathbf{\eta}^{-1}\left(t\right)$
belongs to $C\left(\left[0,T\right],C^{3}\left(\mathbf{R}^{d};\mathbf{R}^{d}\right)\right)$
and for $P-$a.s., $\kappa\left(t\right)$ is the classical solution
of the following equation, 
\begin{eqnarray*}
d\kappa\left(t\right) & = & \left[-\nabla\kappa\left(t\right)\mathbf{u}\left(t\right)+\frac{\varepsilon^{2}}{2}\Delta\mathbf{\kappa}\left(t\right)\right]dt-\varepsilon\partial_{k}\mathbf{\kappa}\left(t\right)dB_{t}^{k}\\
\mathbf{\kappa}\left(0\right) & = & e,\hspace{1em}t\in\left(0,T\right].
\end{eqnarray*}
Therefore, for any $m,l=1,\ldots,d,$
\begin{align*}
 & d\partial_{l}\kappa^{m}\left(t\right)\\
 & =\left[-\nabla\partial_{l}\kappa^{m}\left(t\right)\mathbf{u}\left(t\right)-\nabla\kappa^{m}\left(t\right)\partial_{l}\mathbf{u}\left(t\right)+\frac{\varepsilon^{2}}{2}\Delta\partial_{l}\kappa^{m}\left(t\right)\right]dt-\varepsilon\partial_{k}\partial_{l}\kappa^{m}\left(t\right)dB_{t}^{k}.
\end{align*}

\noindent By writing the above equation in a matrix form with $A\left(t\right)=\left(\partial_{l}\kappa^{m}\left(t\right)\right){}_{\substack{1\leq m\leq d,\\
1\leq l\leq d
}
}$ and multiplying by $\mathbf{g}_{\eta}\left(t,\kappa\left(t\right)\right)$
from the right, we obtain

\noindent
\begin{align*}
 & \left(dA\left(t\right)^{\ast}\right)\mathbf{g}_{\eta}\left(t,\kappa\left(t\right)\right)\\
 & =\left(-u^{k}\left(t\right)\partial_{k}A\left(t\right)^{\ast}-\left(\nabla\mathbf{u}\left(t\right)\right)^{\ast}A\left(t\right)^{\ast}+\frac{\varepsilon^{2}}{2}\Delta A\left(t\right)^{\ast}\right)\mathbf{g}_{\eta}\left(t,\kappa\left(t\right)\right)dt\\
 & -\varepsilon\partial_{k}A\left(t\right)^{\ast}\mathbf{g}_{\eta}\left(t,\kappa\left(t\right)\right)dB_{t}^{k}.
\end{align*}

\noindent Next applying Itô-Wentzell formula for $\mathbf{g}_{\eta}\left(t,\kappa\left(t\right)\right)$
and multiplying by $A\left(t\right)^{\ast}$ from the left, we obtain

\noindent
\begin{align*}
 & A\left(t\right)^{\ast}d\mathbf{g}_{\eta}\left(t,\kappa\left(t\right)\right)\\
 & =\left[G\left(t\right)-A\left(t\right)^{\ast}\nabla\mathbf{g}_{\eta}\left(t,\kappa\left(t\right)\right)\nabla\kappa\left(t\right)\mathbf{u}\left(t\right)\right]dt\\
 & +\frac{\varepsilon^{2}}{2}\left(A\left(t\right)^{\ast}\nabla\mathbf{g}_{\eta}\left(t,\kappa\left(t\right)\right)\Delta\mathbf{\kappa}\left(t\right)+A\left(t\right)^{\ast}\partial_{ij}^{2}\mathbf{g}_{\eta}\left(t,\kappa\left(t\right)\right)\left(\nabla\kappa^{i}\left(t\right)\cdot\nabla\kappa^{j}\left(t\right)\right)\right)dt\\
 & -\varepsilon A\left(t\right)^{\ast}\nabla\mathbf{g}_{\eta}\left(t,\kappa\left(t\right)\right)\partial_{k}\mathbf{\kappa}\left(t\right)dB_{t}^{k}.
\end{align*}

\noindent Finally, the covariation term is $d\left[A\left(t\right)^{\ast},\mathbf{g}_{\eta}\left(t,\kappa\left(t\right)\right)\right]=\epsilon^{2}\partial_{k}A\left(t\right)^{\ast}\nabla\mathbf{g}_{\eta}\left(t,\kappa\left(t\right)\right)\partial_{k}\mathbf{\kappa}\left(t\right)dt.$

\noindent By Itô product rule, summing the terms above, $\mathbf{z}\left(t\right)=A\left(t\right)^{\ast}\mathbf{g}_{\eta}\left(t,\mathbf{\kappa}\left(t\right)\right)$
must satisfy the following equation
\begin{eqnarray*}
d\mathbf{z}\left(t\right) & = & \left[G\left(t\right)-u^{k}\left(t\right)\partial_{k}\mathbf{z}\left(t\right)-\left(\nabla\mathbf{u}\left(t\right)\right)^{\ast}\mathbf{z}\left(t\right)+\frac{\varepsilon^{2}}{2}\Delta\mathbf{z}\left(t\right)\right]dt\\
 & - & \varepsilon\partial_{k}\mathbf{z}\left(t\right)dB_{t}^{k}\\
\mathbf{z}\left(0\right) & = & \mathbf{u}_{0}.
\end{eqnarray*}

\noindent Due to (i), $\mathbf{z}\left(t\right)\in H_{\theta+l,p}^{l}\left(\mathbf{R}^{d};\mathbf{R}^{d}\right)$.
According to Lemma \ref{lem:mik1_smoothing_decomp}, we may let $\mathbf{z}\left(t\right)=\mathcal{S}\mathbf{z}\left(t\right)+\left(\nabla p\left(t\right)\right)^{\ast}$
be its Helmholtz decomposition in $H_{\theta+l,p}^{l}\left(\mathbf{R}^{d};\mathbf{R}^{d}\right)$
where $p\left(t\right)\in H_{\theta+l}^{l+1}\left(\mathbf{R}^{d}\right)$
is a scalar. By collecting all gradient terms, we derive 
\begin{eqnarray*}
d\mathbf{z}\left(t\right) & = & \left[G\left(t\right)-u^{k}\left(t\right)\partial_{k}\mathcal{S}\mathbf{z}\left(t\right)-\left(\nabla\mathbf{u}\left(t\right)\right)^{\ast}\mathcal{S}\mathbf{z}\left(t\right)+\frac{\varepsilon^{2}}{2}\Delta\mathcal{S}\mathbf{z}\left(t\right)+\left(\nabla q\left(t\right)\right)^{\ast}\right]dt\\
 & - & \varepsilon\partial_{k}\mathbf{z}\left(t\right)dB_{t}^{k}\\
\mathbf{z}\left(0\right) & = & \mathbf{u}_{0}
\end{eqnarray*}

\noindent where $q\left(t\right)=-\nabla p\left(t\right)\mathbf{u}\left(t\right)-\frac{\varepsilon^{2}}{2}\Delta p\left(t\right).$
Next, we take the optional projection $\mathbf{E}_{t}^{W}$on both
sides. Due to (ii), $\mathbf{E}_{t}^{W}$ can be interchanged with
the integral with respect to $dt$ and derivatives of $\mathcal{S}\mathbf{z}\left(t\right).$ 

\noindent Finally, by taking a solenoidal projection in $H_{\theta+l-2,p}^{l-2}\left(\mathbf{R}^{d};\mathbf{R}^{d}\right)$
and applying Remark \ref{rem:sol_0}, it is easily verified that 

\noindent
\begin{eqnarray*}
\mathbf{y}\left(t\right) & =\mathbf{E}_{t}^{W}\mathcal{S}\mathbf{z}\left(t\right)= & \mathbf{E}_{t}^{W}\mathcal{S}\left[\left(\nabla\mathbf{\kappa}\left(t\right)\right)^{\ast}\mathbf{g_{\eta}}\left(t,\mathbf{\kappa}\left(t\right)\right)\right]
\end{eqnarray*}

\noindent is a solution of (\ref{eq:LinSNSE}) as an equality in $H_{\theta+l-2,p}^{l-2}\left(\mathbf{R}^{d};\mathbf{R}^{d}\right)$.
\end{proof}
In the next Lemma, we derive a simplified form of $\mathbf{E}_{t}^{W}\mathcal{S}\left[\left(\nabla\mathbf{\kappa}\left(t\right)\right)^{\ast}\mathbf{g_{\eta}}\left(t,\mathbf{\kappa}\left(t\right)\right)\right].$
\begin{lem}
\label{lem:simplified_K}Let $p>d,\,l\geq2,\,\theta\in\left(1,d\right)$
and $F\left(l\right)$ holds. Let $\eta\left(t\right):\mathbf{R}^{d}\rightarrow\mathbf{R}^{d},\,t\in\left[0,T\right]$
be a diffeomorphism and $\kappa\left(t\right):\mathbf{R}^{d}\rightarrow\mathbf{R}^{d},\,t\in\left[0,T\right]$
be its spatial inverse such that 

\vspace{10pt}

\noindent (i) $\left(\nabla\mathbf{\kappa}\left(t\right)\right)^{\ast}\mathbf{g_{\eta}}\left(t,\mathbf{\kappa}\left(t\right)\right)\in H_{\theta+l,p}^{l}\left(\mathbf{R}^{d};\mathbf{R}^{d}\right)$
for all $\left(\omega,t\right)\in\Omega\times\left[0,T\right],$ 

\noindent (ii) $\eta\left(t\right)$, $\kappa\left(t\right)$ are
spatially twice differentiable for all $\left(\omega,t\right)\in\Omega\times\left[0,T\right],$ 

\noindent (iii) for any multi-index $\gamma\in\mathbf{N}_{0}^{d}$
with $0\leq\left|\gamma\right|\leq2$, $D^{\gamma}\eta\left(t,x\right)$
is continuous in $t$ for all $\left(\omega,x\right)\in\Omega\times\mathbf{R}^{d}.$

\noindent Then for all $\left(\omega,t\right)\in\Omega\times\left[0,T\right],$
\begin{equation}
\mathcal{S}\left(\left(\nabla\mathbf{\kappa}\left(t\right)\right)^{\ast}\mathbf{g_{\eta}}\left(t,\mathbf{\kappa}\left(t\right)\right)\right)=K_{\eta,\mathbf{h_{\eta}}}\left(t\right)\label{eq:S2K}
\end{equation}

\noindent where for $j=1,...,d,$
\begin{align}
K_{\eta,\mathbf{h_{\eta}}}^{j}\left(t,x\right) & =\int\Gamma_{i}\left(x-z\right)\left[\phi_{\eta,\mathbf{h_{\eta}}}^{ji}\left(t,z\right)-\phi_{\eta,\mathbf{h}_{\eta}}^{ij}\left(t,z\right)\right]dz,\label{eq:SimplifiedKform}
\end{align}

\noindent
\[
\phi_{\eta,\mathbf{h}}\left(t,x\right)=\left(\nabla\kappa\left(t,x\right)\right)^{\ast}\mathbf{h}\left(t,\kappa\left(t,x\right)\right)\nabla\kappa\left(t,x\right)
\]

\noindent and
\[
\mathbf{h}_{\eta}\left(t,x\right)=\nabla\mathbf{u_{0}}\left(x\right)+\int_{0}^{t}\left(\nabla\eta\left(s,x\right)\right)^{\ast}\nabla G\left(s,\eta\left(s,x\right)\right)\left(\nabla\eta\left(s,x\right)\right)ds,
\]

\noindent for all $\left(\omega,t,x\right)\in\Omega\times\left[0,T\right]\times\mathbf{R}^{d}.$
\end{lem}
\begin{proof}
\noindent In fact, if $f\in H_{\theta+l,p}^{l}\left(\mathbf{R}^{d};\mathbf{R}^{d}\right)$
then by Remark \ref{rem:sol_0},
\begin{equation}
\left(\mathcal{S}f\right)^{j}=\int\Gamma_{i}\left(\cdot-z\right)\left(\partial_{i}f^{j}\left(z\right)-\partial_{j}f^{i}\left(z\right)\right)dz.\label{eq:SF}
\end{equation}

\noindent Applying (\ref{eq:SF}) for $\left(\nabla\mathbf{\kappa}\left(t\right)\right)^{\ast}\mathbf{g_{\eta}}\left(t,\mathbf{\kappa}\left(t\right)\right)\in H_{\theta+l,p}^{l}\left(\mathbf{R}^{d};\mathbf{R}^{d}\right),$
we obtain
\begin{align*}
 & \left(\mathcal{S}\left(\left(\nabla\mathbf{\kappa}\left(t\right)\right)^{\ast}\mathbf{g_{\eta}}\left(t,\mathbf{\kappa}\left(t\right)\right)\right)\right)^{j}\\
 & =\int\Gamma_{i}\left(\cdot-z\right)\left[\partial_{i}\left(\partial_{j}\mathbf{\kappa}^{k}\left(t,z\right)\mathbf{g}_{\eta}^{k}\left(t,\mathbf{\kappa}\left(t,z\right)\right)\right)-\partial_{j}\left(\partial_{i}\mathbf{\kappa}^{k}\left(t,z\right)\mathbf{g}_{\eta}^{k}\left(t,\mathbf{\kappa}\left(t,z\right)\right)\right)\right]dz\\
 & =\int\Gamma_{i}\left(\cdot-z\right)\left[\partial_{j}\mathbf{\kappa}^{k}\left(t,z\right)\nabla\mathbf{g}_{\eta}^{k}\left(t,\mathbf{\kappa}\left(t,z\right)\right)\left(\partial_{i}\kappa\left(t,z\right)\right)^{\ast}-\partial_{i}\mathbf{\kappa}^{k}\left(t,z\right)\nabla\mathbf{g}_{\eta}^{k}\left(t,\mathbf{\kappa}\left(t,z\right)\right)\left(\partial_{j}\kappa\left(t,z\right)\right)^{\ast}\right]dz\\
 & =\int\Gamma_{i}\left(\cdot-z\right)\left[\left(\nabla\mathbf{\kappa}\left(t,z\right)^{\ast}\nabla\mathbf{g}_{\eta}\left(t,\mathbf{\kappa}\left(t,z\right)\right)\nabla\kappa\left(t,z\right)\right)^{ji}-\left(\nabla\mathbf{\kappa}\left(t,z\right)^{\ast}\nabla\mathbf{g}_{\eta}\left(t,\mathbf{\kappa}\left(t,z\right)\right)\nabla\kappa\left(t,z\right)\right)^{ij}\right]dz
\end{align*}
and by (iii),
\begin{align*}
\nabla\mathbf{g}_{\eta}\left(t\right) & =\nabla\mathbf{u}_{0}+\int_{0}^{t}\left(\nabla\eta\left(s\right)\right)^{\ast}\nabla G\left(s,\eta\left(s\right)\right)\nabla\eta\left(s\right)ds+\int_{0}^{t}A\left(s\right)ds
\end{align*}

\noindent where $\left(A\left(s\right)\right)_{ij}=\partial_{ij}^{2}\eta^{k}\left(s\right)G^{k}\left(s,\eta\left(s\right)\right)$
is symmetric. The statement follows.
\end{proof}
\begin{rem}
\noindent\label{rem:formal_flow_derivation}For the time being, we
provide a formal argument to derive the form of the flow equations.
If a diffeomorphism $\eta\left(t\right)$ and $\kappa\left(t\right)=\eta^{-1}\left(t\right)$
its spatial inverse satisfy
\begin{eqnarray}
d\mathbf{\eta}\left(t\right) & = & \mathbf{E}_{t}^{W}\mathcal{S}\left[\left(\nabla\mathbf{\kappa}\left(t,z\right)\right)^{\ast}\mathbf{g}_{\eta}\left(t,\mathbf{\kappa}\left(t,z\right)\right)\right]|_{z=\mathbf{\eta}\left(t\right)}dt+\varepsilon dB_{t}\label{eq:d4}\\
 & = & \mathbf{E}_{t}^{W}K_{\eta,\mathbf{h_{\eta}}}\left(t,\eta\left(t\right)\right)+\epsilon dB_{t}\nonumber \\
\eta\left(0\right) & = & e,\hspace{1em}t\in\left(0,T\right],\nonumber 
\end{eqnarray}
then by letting $\mathbf{u}\left(t\right)=\mathbf{E}_{t}^{W}\mathcal{S}\left[\left(\nabla\mathbf{\kappa}\left(t\right)\right)^{\ast}\mathbf{g}_{\eta}\left(t,\mathbf{\kappa}\left(t\right)\right)\right]$
in Lemma \ref{lem:flow1}, (\ref{eq:LinSNSE}) becomes (\ref{eq:NSE})
where the term $\left(\nabla\mathbf{u}\left(t\right)\right)^{\ast}\mathbf{u}\left(t\right)=\frac{1}{2}\nabla\left|\mathbf{u}\left(t\right)\right|^{2}$
disappears under the solenoidal projection.

Our strategy is to find a solution of (\ref{eq:d4}) in appropriate
weighted Sobolev spaces and then return to Lemma \ref{lem:flow1}
and show that indeed the velocity 
\[
\mathbf{u}\left(t\right)=\mathbf{E}_{t}^{W}\mathcal{S}\left[\left(\nabla\mathbf{\kappa}\left(t\right)\right)^{\ast}\mathbf{g}_{\eta}\left(t,\mathbf{\kappa}\left(t\right)\right)\right]
\]

\noindent is a $H_{\theta,p}^{l+1}-$solution of (\ref{eq:NSE}) where
the equality is understood in $H_{\theta+l-2,p}^{l-2}\left(\mathbf{R}^{d};\mathbf{R}^{d}\right)$
(see Theorem \ref{thm:main_velocity} for the complete statement.) 
\end{rem}

\subsection{Main Results}

We are now ready to state main results of this paper. Intuitively,
$\eta\left(t\right)$ remains close to the identity mapping for a
short time. Due to the lack of Sobolev regularity of constants and
the identity map $e$, it is convenient to consider for $t\geq0$
the displacement $\zeta\left(t\right)$ defined by 

\noindent
\begin{align*}
\zeta\left(t\right) & =\eta\left(t\right)-e-\epsilon B_{t}.
\end{align*}

\noindent Therefore, from (\ref{eq:d4}), $\zeta\left(t\right)$ must
satisfy the equation 

\noindent
\begin{eqnarray*}
d\mathbf{\zeta}\left(t\right) & = & \mathbf{E}_{t}^{W}K_{\eta,\mathbf{h}_{\eta}}\left(t,\eta\left(t\right)\right)dt\\
\zeta\left(0\right) & = & 0.
\end{eqnarray*}

We start with the existence of the flow equation.
\begin{thm}
\textcolor{red}{\label{thm:main_flow} }Let $p>d,\,l>1+\frac{d}{p},\,\theta\in\left[1+\frac{d}{p},d\right),\,N>0$
and 
\[
\sup_{\omega\in\Omega}\mathbf{\left|u_{0}\right|}_{H_{\theta+l,p}^{l+1}}+\sup_{\left(\omega,t\right)\in\Omega\times\left[0,\infty\right)}\left|G\left(t\right)\right|_{H_{\theta+l,p}^{l+1}}\leq N.
\]
Then for some deterministic $T>0,$ there exists $\zeta\in C\left(\left[0,T\right],H_{\theta+l,p}^{l+1}\left(\mathbf{R}^{d};\mathbf{R}^{d}\right)\right)$
such that
\begin{align}
\zeta\left(t\right) & =\int_{0}^{t}\mathbf{E}_{s}^{W}K_{\eta,\mathbf{h}_{\eta}}\left(s,\eta\left(s\right)\right)ds,\,\left(\omega,t\right)\in\Omega\times\left[0,T\right]\label{eq:iter}
\end{align}

\noindent holds in $H_{\theta+l-1,p}^{l}\left(\mathbf{R}^{d};\mathbf{R}^{d}\right)$.
Moreover, there exists $M>0$ such that 
\[
\sup_{\left(\omega,t\right)\in\Omega\times\left[0,T\right]}\left|\zeta\right|_{H_{\theta+l,p}^{l+1}}\text{\ensuremath{\leq}}M.
\]
\end{thm}
Next, we state the existence of the velocity equation. 
\begin{thm}
\noindent\label{thm:main_velocity} Let $p>d,\,l>2+\frac{d}{p},\,\theta\in\left[1+\frac{d}{p},d\right),\,N>0,$
$F\left(l\right)$ holds and 
\[
\sup_{\omega\in\Omega}\mathbf{\left|u_{0}\right|}_{H_{\theta+l,p}^{l+1}}+\sup_{\left(\omega,t\right)\in\Omega\times\left[0,\infty\right)}\left|G\left(t\right)\right|_{H_{\theta+l,p}^{l+1}}\leq N.
\]

\noindent Suppose that $\zeta$ is the solution of (\ref{eq:iter})
as given in Theorem \ref{thm:main_flow}. Then for $P-$a.s., $\mathbf{u}\left(t\right):=\mathbf{E}_{t}^{W}\mathcal{S}\left(\nabla\mathbf{\kappa}\left(t\right)\right)^{\ast}\mathbf{g}_{\eta}\left(t,\mathbf{\kappa}\left(t\right)\right),\,\left(\omega,t\right)\in\Omega\times\left[0,T\right]$
solves
\begin{align}
d\mathbf{u}\left(t\right)= & \left[\mathcal{S}\left(-u^{k}\left(t\right)\partial_{k}\mathbf{u}\left(t\right)\right)+\frac{\varepsilon^{2}}{2}\Delta\mathbf{u}\left(t\right)+G\left(t\right)\right]dt\label{eq:NSE-cor}\\
\mathbf{u}\left(0\right)= & \mathbf{u}_{0},\hspace{1em}\text{div }\mathbf{u}\left(t\right)=0,\hspace{1em}t\in\left(0,T\right]\nonumber 
\end{align}

\noindent as an equality in $H_{\theta+l-2,p}^{l-2}\left(\mathbf{R}^{d};\mathbf{R}^{d}\right).$
Moreover, there exists $M>0$ such that 
\[
\sup_{\left(\omega,t\right)\in\Omega\times\left[0,T\right]}\left|\mathbf{u}\left(t\right)\right|_{H_{\theta+l,p}^{l+1}}\leq M.
\]
\end{thm}

\section{Estimates of Diffeomorphisms}

We collect some basic estimates regarding diffeomorphisms. First,
for any differentiable function $f:\mathbf{R}^{d}\rightarrow\mathbf{R}^{d}$,

\noindent
\[
\frac{w\left(f\left(x\right)\right)}{w\left(x\right)}=\frac{\left(1+\left|f\left(x\right)\right|^{2}\right){}^{1/2}}{\left(1+\left|x\right|^{2}\right)^{1/2}}\leq1+\left\Vert f\right\Vert ,\,\left\Vert f\right\Vert =\left\vert f\left(0\right)\right\vert +\left\vert \nabla f\right\vert _{\infty}.
\]

\noindent If $\eta=e+b+\zeta$, for some $b\in\mathbf{R}^{d}$ and
a continuously differentiable $\zeta:\mathbf{R}^{d}\rightarrow\mathbf{R}^{d}$
with $\left\vert \nabla\zeta\right\vert _{\infty}\leq\frac{1}{2d^{2}}$,
then $\eta$ is a diffeomorphism with spatial inverse $\kappa$ ,
$\nabla\eta=I+\nabla\zeta$, $\left(\nabla\eta\right)^{-1}=\sum_{n=0}^{\infty}\left(-1\right)^{n}\left(\nabla\zeta\right)^{n}$,
and 
\begin{eqnarray}
\left\vert \nabla\kappa\right\vert _{\infty} & = & \left\vert \left(\nabla\eta\right)^{-1}\right\vert _{\infty}\leq1+\sum_{n=1}^{\infty}d^{n-1}\left\vert \nabla\zeta\right\vert _{\infty}^{n}\leq1+\sum_{n=1}^{\infty}\frac{1}{2^{n}d^{n+1}}\label{eq:nablaInvInf}\\
 & \leq & 1+\frac{1}{d\left(2d-1\right)}\leq2.\nonumber 
\end{eqnarray}

\noindent Moreover, 

\noindent
\begin{align}
\left|\nabla\kappa-I\right|_{\infty} & =\left|\left(\nabla\eta\right)^{-1}-I\right|_{\infty}\leq\frac{1}{d\left(2d-1\right)}\leq\frac{1}{2d}.\label{eq:inverseI}
\end{align}

\noindent For any $x,y\in\mathbf{R}^{d},$
\[
\left\vert x-y\right\vert \leq\left\vert \nabla\kappa\right\vert _{\infty}\left\vert \eta\left(x\right)-\eta\left(y\right)\right\vert 
\]
If we take $y=\kappa\left(0\right),x=0$, then 
\begin{equation}
\left\vert \kappa\left(0\right)\right\vert \leq\left\vert \nabla\kappa\right\vert _{\infty}\left\vert \eta\left(0\right)\right\vert .\label{eq:kappa0}
\end{equation}

\noindent Therefore, by (\ref{eq:nablaInvInf}), 
\begin{eqnarray}
\left\Vert \kappa\right\Vert  & = & \left\vert \kappa\left(0\right)\right\vert +\left\vert \nabla\kappa\right\vert _{\infty}\leq\left(1+\left\vert \eta\left(0\right)\right\vert \right)\left\vert \nabla\kappa\right\vert _{\infty}\label{eq:eta_inv_db}\\
 & \leq & 2\left(1+\left\vert b\right\vert +\left\vert \zeta\left(0\right)\right\vert \right).\nonumber 
\end{eqnarray}

\noindent For determinants of Jacobian matrices, by (\ref{eq:nablaInvInf})
for all $x\in\mathbf{R}^{d},$
\begin{eqnarray}
\left\vert \det\nabla\kappa\left(x\right)\right\vert  & \leq & C\left\vert \nabla\kappa\right\vert _{\infty}^{d}\leq C,\label{eq:JacobInv}\\
\left\vert \det\nabla\eta\left(x\right)\right\vert  & \leq & C\left(1+\left\vert \nabla\zeta\right\vert _{\infty}\right)^{d}\leq C.\label{eq:Jacob}
\end{eqnarray}

We now discuss linear combination of $\eta=e+b+\zeta$ and $\bar{\eta}=e+\bar{b}+\bar{\zeta}$
where $\left\vert \nabla\zeta\right\vert _{\infty},\left\vert \nabla\bar{\zeta}\right\vert _{\infty}\leq\frac{1}{2d^{2}}$.
Considering for $s\in\left[0,1\right],\eta_{s}=\left(1-s\right)\eta+s\bar{\eta}$,
$b_{s}=\left(1-s\right)b+s\bar{b},\zeta_{s}=\left(1-s\right)\zeta+s\bar{\zeta},$
we have 
\begin{eqnarray*}
\eta_{s} & = & e+b_{s}+\zeta_{s},\\
\nabla\eta_{s} & = & I+\nabla\zeta_{s},
\end{eqnarray*}

\noindent where $\left|\nabla\zeta_{s}\right|_{\infty}\leq\frac{1}{2d^{2}}.$
Hence, by (\ref{eq:nablaInvInf}), $\left\vert \nabla\left(\eta_{s}^{-1}\right)\right\vert _{\infty}=\left\vert \left(\nabla\eta_{s}\right)^{-1}\right\vert _{\infty}\leq2$.
Denoting $b_{0}=\max\left\{ \left\vert b\right\vert ,\left\vert \bar{b}\right\vert \right\} ,$
$l_{0}=\max\left\{ \left\vert \zeta\left(0\right)\right\vert ,\left\vert \bar{\zeta}\left(0\right)\right\vert \right\} ,$
it follows from (\ref{eq:eta_inv_db}) that

\begin{equation}
\left\Vert \eta_{s}^{-1}\right\Vert \leq2\left(1+b_{0}+l_{0}\right).\label{eq:InterInvD}
\end{equation}

Obviously, for all $x\in\mathbf{R}^{d}$, 
\begin{equation}
\left\vert \det\nabla\left(\eta_{s}^{-1}\right)\left(x\right)\right\vert \leq C,\,\left\vert \det\nabla\eta_{s}\left(x\right)\right\vert \leq C.\label{eq:det_Inter}
\end{equation}

\begin{defn}
For $M>0,\,T>0,\,p>1,\,l\geq0,\,\theta\in\left(1,d\right),$ we say
that $\zeta\in\mathcal{A}_{M,T}^{p,l,\theta}$ if for all $\left(\omega,t\right)\in\Omega\times\left[0,T\right],$
$\zeta\left(t\right)$ is continuously differentiable and 

\noindent (i)$\left\vert \nabla\zeta\left(t\right)\right\vert _{\infty}\leq\frac{1}{2d^{2}}$,
$\left|\zeta\left(t,0\right)\right|\leq M,$

\noindent (ii) $\left|\zeta\left(t\right)\right|_{H_{\theta+l,p}^{l+1}}\leq M.$
\end{defn}
We will consistently denote $\eta\left(t\right)=e+\epsilon B_{t}+\zeta\left(t\right)$
and $\kappa\left(t\right)$ to be the spatial inverse of $\eta\left(t\right).$
Assumptions (i), (ii) will be used without explicit mention. Clearly,
if $\zeta\in\mathcal{A}_{M,T}^{p,l,\theta}$ then $\eta\left(t\right)$
is a diffeomorphism. We start with a simple Lemma which facilitates
later computations. To ease notation, we will write $\lambda_{t}=\lambda_{t}^{\epsilon}=1+\epsilon\sup_{s\in\left[0,t\right]}\left|B_{s}\right|,\,t\geq0.$
\begin{lem}
\noindent\label{lem:weightC} There exists $C>0$ such that for all
$\zeta\in\mathcal{A}_{M,T}^{p,l,\theta}$ and $\left(\omega,t,x\right)\in\Omega\times\left[0,T\right]\times\mathbf{R}^{d},$
\[
\frac{w\left(\eta\left(t\right)\right)}{w}\leq C\lambda_{t},\hspace{1em}\frac{w\left(\kappa\left(t\right)\right)}{w}\leq C\lambda_{t}.
\]

\noindent Moreover, 
\[
\left|\det\nabla\eta\left(t\right)\right|_{\infty}\leq C,\hspace{1em}\left|\det\nabla\kappa\left(t\right)\right|_{\infty}\leq C.
\]

\noindent All estimates also hold for a linear interpolation $\eta_{a}=a\eta_{1}+\left(1-a\right)\eta_{2},\,a\in\left[0,1\right]$
of $\eta_{i}\in\mathcal{A}_{M,T}^{p,l,\theta},\,$$i=1,2$ with constants
independent of $a.$ 
\end{lem}
\begin{proof}
\noindent Trivially, 

\noindent
\begin{align*}
\frac{w\left(\eta\left(t\right)\right)}{w} & \leq1+\left\Vert \eta\left(t\right)\right\Vert =1+\left|\eta\left(t,0\right)\right|+\left|\nabla\eta\left(t\right)\right|_{\infty}\\
 & \leq1+\left|\zeta\left(t,0\right)\right|+\epsilon\left|B_{t}\right|+1+\left|\nabla\zeta\left(t\right)\right|_{\infty}\\
 & \leq C\lambda_{t}.
\end{align*}

\noindent The estimates for $\kappa\left(t\right)=\eta^{-1}\left(t\right)$
follows directly from (\ref{eq:eta_inv_db}). The estimates for Jacobian
matrices follows immediately from (\ref{eq:Jacob}) and (\ref{eq:JacobInv})
respectively.
\end{proof}

\subsection{Growth Estimates of the Spatial Inverse }
\begin{lem}
\label{lem:kappaGrowth} Let $p>d,\,l\geq1,\,\theta\in\left(1,d\right),\,\sigma=\theta-\frac{d}{p}.$ 

\noindent (i) There exists $C>0$ such that for all $\zeta\in\mathcal{A}_{M,T}^{p,l,\theta}$
and $\left(\omega,t\right)\in\Omega\times\left[0,T\right],$ 

\noindent
\begin{align*}
\left|\nabla\kappa\left(t\right)-I\right|_{\infty} & \leq\frac{1}{2d},\\
\left|\kappa\left(t,0\right)\right| & \leq C\lambda_{t}.
\end{align*}

\noindent (ii) There exists $C>0$ such that for all $\zeta\in\mathcal{A}_{M,T}^{p,l,\theta}$
and $\left(\omega,t\right)\in\Omega\times\left[0,T\right],$ 
\[
\left|w^{\sigma-1}\left(\nabla\kappa\left(t\right)-I\right)\right|_{p}\leq C\lambda_{t}^{\left|\sigma-1\right|}.
\]

\noindent (iii) There exists $C>0$ such that for all $\zeta\in\mathcal{A}_{M,T}^{p,l,\theta},$
$\left(\omega,t\right)\in\Omega\times\left[0,T\right]$ and $1\leq k\leq l,$

\noindent
\begin{eqnarray*}
\left\vert w^{k}D^{k}\nabla\kappa\left(t\right)\right\vert _{p} & \leq & C\lambda_{t}^{p_{k}},\\
\left\vert w^{\sigma+k}D^{k}\nabla\kappa\left(t\right)\right\vert _{p} & \leq & C\lambda_{t}^{\sigma+p_{k}},
\end{eqnarray*}

\noindent where $p_{k}$ is defined recursively as $p_{k}=k\left(1+p_{k-1}\right),\,k\geq2,\,p_{1}=1.$
\end{lem}
\begin{proof}
\noindent The variable $t$ is mostly dropped throughout the proof
since all estimates simply holds for each $t$. We resort to tools
in differential calculus on norm vector spaces. For a primer on the
subject, an interested reader may consult \citep{Ca}. 

\noindent (i) The first estimate is simply (\ref{eq:inverseI}). For
the second estimate, by (\ref{eq:kappa0}),
\begin{align*}
\left|\kappa\left(t,0\right)\right| & \leq C\left|\eta\left(t,0\right)\right|\leq C\lambda_{t}.
\end{align*}

\noindent (ii) By changing the variable of integration and Lemma \ref{lem:weightC},

\noindent
\begin{align*}
 & \left|w^{\sigma-1}\left(\nabla\kappa-I\right)\right|_{p}\\
 & \leq\sum_{n=1}^{\infty}\left|w^{\sigma-1}\left(\nabla\zeta\left(\kappa\right)\right)^{n}\right|_{p}\leq C\lambda_{t}^{\left|\sigma-1\right|}\sum_{n=1}^{\infty}\left|\left(\nabla\zeta\right)\right|_{\infty}^{n-1}\left|w^{\sigma-1}\nabla\zeta\right|_{p}\\
 & \leq C\lambda_{t}^{\left|\sigma-1\right|}.
\end{align*}

\noindent (iii) Our goal now is to obtain the form of derivatives
of $\kappa$. Let $M_{d\times d}$ the set of all $d\times d$ matrices
and $U$ be the set of invertible $d\times d$ matrices. Then $F\left(A\right)=A^{-1},A\in U,$
is smooth and its $n-$th Frechet derivative is a continuous multilinear
mapping defined as
\[
F^{\left(n\right)}\left(A\right)\cdot\left(x_{1},...,x_{n}\right)=\left(-1\right)^{n}\sum_{\sigma}A^{-1}x_{\sigma\left(1\right)}A^{-1}...A^{-1}x_{\sigma\left(n\right)}A^{-1}.
\]

\noindent where the summation is taken over all possible permutations.
Clearly, the operator norm of $F^{\left(n\right)}$ satisfies

\noindent
\[
\left\Vert F^{\left(n\right)}\left(A\right)\right\Vert \leq C\left\vert A^{-1}\right\vert ^{n+1},\,A\in U,\,n\geq0
\]
for some $C>0$ (for more details see \citep[Theorem 5.4.3 and relevant exercises]{Ca}.)

\noindent Let $a\in\mathbf{R}^{d},$ $b=\nabla\eta\left(a\right)\in U$,
$1\leq n\leq l$, we write the order $n$ Taylor's expansion of $\nabla\eta$
at $a$ and $F$ at $b$ as follows:
\begin{align*}
\nabla\eta\left(a+x\right) & =\nabla\eta\left(a\right)+\sum_{i=1}^{n}\varphi_{i}\left(x\right)+r\left(x\right)
\end{align*}
where $\varphi_{i}\left(x\right)=\frac{1}{i!}\sum_{\left|\gamma\right|=i}D^{\gamma}\nabla\eta\left(a\right)x^{\gamma},\left|r\left(x\right)\right|=o\left(\left|x\right|^{n}\right),x\in\mathbf{R}^{d},$
and

\noindent
\begin{align*}
F\left(b+y\right) & =F\left(b\right)+\sum_{j=1}^{n}\psi_{j}\left(y\right)+s\left(y\right)
\end{align*}

\noindent where $\psi_{j}\left(y\right)=\psi_{j}\left(y,y,...,y\right)=\frac{1}{j!}F^{\left(j\right)}\left(b\right)\cdot\left(y,y,...,y\right),$
$s\left(y\right)=o\left(\left|y\right|^{n}\right),y\in M_{d\times d}$. 

\noindent We denote $\tilde{\psi}_{j}$ the multi-linear symmetrical
mapping associated with $\psi_{j}$ that is $\tilde{\psi}_{j}\left(y_{1},...,y_{j}\right)=\frac{1}{j!}\sum_{\sigma}\psi_{j}\left(y_{\sigma\left(1\right)},...,y_{\sigma\left(j\right)}\right)$
where the summation is taken over all possible permutations. Next,
by the method outlined in \citep[Section 7.5]{Ca}, the homogeneous
component of order $n$ in the finite expansion of $h=F\circ\nabla\eta$
at $a$ is given by

\noindent
\[
\sum_{j=1}^{n}\sum_{i_{1}+i_{2}+...+i_{j}=n}\tilde{\psi}_{j}\left(\varphi_{i_{1}}\left(x\right),\varphi_{i_{2}}\left(x\right),...,\varphi_{i_{j}}\left(x\right)\right)
\]
\begin{align*}
 & =\sum_{j=1}^{n}\sum_{i_{1}+i_{2}+...+i_{j}=n}\frac{1}{j!}\sum_{\sigma}\psi_{j}\left(\varphi_{i_{\sigma\left(1\right)}}\left(x\right),\varphi_{i_{\sigma\left(2\right)}}\left(x\right),...,\varphi_{i_{\sigma\left(j\right)}}\left(x\right)\right)\\
 & =\sum_{j=1}^{n}\frac{1}{\left(j!\right)^{2}}\sum_{i_{1}+i_{2}+...+i_{j}=n}\sum_{\sigma}F^{\left(j\right)}\left(b\right)\cdot\left(\varphi_{i_{\sigma\left(1\right)}}\left(x\right),\varphi_{i_{\sigma\left(2\right)}}\left(x\right),...,\varphi_{i_{\sigma\left(j\right)}}\left(x\right)\right)
\end{align*}

\noindent and it is equal to $\frac{1}{n!}h^{\left(n\right)}\left(a\right)\cdot\left(x,...,x\right)=\frac{1}{n!}\sum_{\left|\mu\right|=n}D^{\mu}h\left(a\right)x^{\mu}$.
Due to uniqueness of the coefficient of $x^{\mu}$ for each $\mu\in\mathbf{N}_{0}^{d}$
such that $\left|\mu\right|=n$ and the explicit form of $F^{\left(j\right)}$
as multiplication of matrices, $D^{\mu}h\left(a\right)$ must be a
linear combination of terms in the form of 
\[
F^{\left(j\right)}\left(\nabla\eta\left(a\right)\right)\cdot\left(D^{\mu_{1}}\nabla\eta\left(a\right),...,D^{\mu_{j}}\nabla\eta\left(a\right)\right)
\]
where $j=1,...,n$ and $\mu_{1}+\mu_{2}+...+\mu_{j}=\mu.$

\noindent Let $\gamma\in\mathbf{N}_{0}^{d}$ such that $1\leq\left|\gamma\right|=k\leq l$,
then $w^{k}D^{\gamma}\nabla\kappa=w^{k}D^{\gamma}\left(h\left(\kappa\right)\right)$
is a linear combination of terms in the form of 
\begin{align*}
 & F^{\left(j\right)}\left(\nabla\eta\left(\kappa\right)\right)\cdot\left(w^{\left|\mu_{1}\right|}D^{\mu_{1}}\nabla\eta\left(\kappa\right),...,w^{\left|\mu_{j}\right|}D^{\mu_{j}}\nabla\eta\left(\kappa\right)\right)\prod_{i=1}^{n}w^{\left|\alpha_{i}\right|-1}D^{\alpha_{i}}\kappa^{a_{i}}
\end{align*}

\noindent where $\alpha_{1}+\ldots+\alpha_{n}=\gamma,\left|\alpha_{i}\right|\geq1\text{\text{ for }}i=1,...,n,\left\vert \mu\right\vert =n,1\leq n\leq k\leq l$,
and $1\leq a_{i}\leq d$ are component indices. 

\noindent We note that by (i), 
\[
\left|\left(\nabla\eta\left(\kappa\right)\right)^{-1}\right|_{\infty}=\left|\nabla\kappa\right|_{\infty}\leq2.
\]

\noindent If $k=1,$ then by Lemma \ref{lem:weightC}, 
\begin{align*}
\left|wD^{\gamma}\nabla\kappa\right|_{p} & \leq C\left|wD^{2}\eta\left(\kappa\right)\right|_{p}\left|\nabla\kappa\right|_{\infty}\\
 & \leq C\lambda_{t}.
\end{align*}

\noindent Now for $2\leq k\leq l,$ we proceed with a strong induction-
assuming that the first estimate in (iii) holds up to $1\leq k-1\leq l-1.$
We estimate each term in the summation denoting
\[
\mathcal{A}=F^{\left(j\right)}\left(\nabla\eta\left(\kappa\right)\right)\cdot\left(w^{\left|\mu_{1}\right|}D^{\mu_{1}}\nabla\eta\left(\kappa\right),...,w^{\left|\mu_{j}\right|}D^{\mu_{j}}\nabla\eta\left(\kappa\right)\right)\prod_{i=1}^{n}w^{\left|\alpha_{i}\right|-1}D^{\alpha_{i}}\kappa^{a_{i}}.
\]

\noindent If $n<l$ then $l-\left|\mu_{i}\right|\geq l-n>\frac{d}{p}$.
By Lemma \ref{lem:weightC} and Sobolev embedding theorem, 
\begin{align*}
\left|F^{\left(j\right)}\left(\nabla\eta\left(\kappa\right)\right)\cdot\left(w^{\left|\mu_{1}\right|}D^{\mu_{1}}\nabla\eta\left(\kappa\right),...,w^{\left|\mu_{j}\right|}D^{\mu_{j}}\nabla\eta\left(\kappa\right)\right)\right|_{\infty} & \leq C\lambda_{t}^{n}.
\end{align*}

\noindent Therefore$,$ by Corollary \ref{cor-product} with $k>\frac{d}{p}$
and the induction hypothesis, 
\begin{align*}
\mathcal{\left|A\right|}_{p} & \leq C\lambda_{t}^{n}\prod_{i=1,\left|\alpha_{i}\right|\geq2}^{n}\left|w^{\left|\alpha_{i}\right|-1}D^{\alpha_{i}}\kappa^{a_{i}}\right|_{H_{p}^{k-\left|\alpha_{i}\right|}}\\
 & \leq C\lambda_{t}^{n}\prod_{i=1,\left|\alpha_{i}\right|\geq2}^{n}\left|D^{\alpha_{i}}\kappa^{a_{i}}\right|_{H_{d/p+k-1,p}^{k-\left|\alpha_{i}\right|}}\leq C\lambda_{t}^{n}\left|D^{2}\kappa^{a_{i}}\right|_{H_{d/p+k-1,p}^{k-2}}^{n}\\
 & \leq C\lambda_{t}^{k}\lambda_{t}^{kp_{k-1}}=C\lambda_{t}^{p_{k}}.
\end{align*}

\noindent If $n=l$ then $\left|\alpha_{i}\right|=1,\,i=1,...,l$
and thus $\prod_{i=1}^{n}\left|w^{\left|\alpha_{i}\right|-1}D^{\alpha_{i}}\kappa^{a_{i}}\right|_{\infty}\leq C.$ 

\noindent Hence, due to Lemma \ref{lem:weightC} and Corollary \ref{cor-product}
,

\noindent
\begin{align*}
\left|\mathcal{A}\right|_{p}\leq & C\left|F^{\left(j\right)}\left(\nabla\eta\left(\kappa\right)\right)\cdot\left(w^{\left|\mu_{1}\right|}D^{\mu_{1}}\nabla\eta\left(\kappa\right),...,w^{\left|\mu_{j}\right|}D^{\mu_{j}}\nabla\eta\left(\kappa\right)\right)\right|_{p}\\
\leq & C\lambda_{t}^{k}\left|\prod_{i=1}^{j}w^{\left|\mu_{i}\right|}D^{\mu_{i}}\nabla\eta\right|_{p}\leq C\lambda_{t}^{p_{k}}.
\end{align*}

\noindent Therefore, the first estimate in (iii) holds for $k$ completing
the induction. The second estimate follows from multiplying $\mathcal{A}$
by $w^{\sigma}$, replacing $w^{\left|\mu_{1}\right|}D^{\mu_{1}}\nabla\eta\left(\kappa\right)$
with $w^{\sigma+\left|\mu_{1}\right|}D^{\mu_{1}}\nabla\eta\left(\kappa\right),$
and applying the first estimate. 
\end{proof}
Next, we derive a growth estimate for $K_{\eta,\mathbf{h}}$ given
by (\ref{eq:SimplifiedKform}) with a general $\mathbf{h}$ in place
of $\mathbf{h}_{\eta}.$
\begin{lem}
\label{lem:K_growth} Let $p>d,\,l>\frac{d}{p},\,\theta\in\left(1,d\right).$
Then there exist $C,r>0$ so that for all $\zeta\in\mathcal{A}_{M,T}^{p,l,\theta}$
and $\mathbf{h}:\Omega\times\left[0,T\right]\rightarrow H_{\theta+l,p}^{l}\left(\mathbf{R}^{d};\mathbf{R}^{d}\times\mathbf{R}^{d}\right),$ 

\begin{align*}
\left|K_{\eta,\mathbf{h}}\left(t\right)\right|_{H_{\theta+l,p}^{l+1}} & \leq C\lambda_{t}^{r}\left|\mathbf{h}\left(t\right)\right|_{H_{\theta+l,p}^{l}},\,\left(\omega,t\right)\in\Omega\times\left[0,T\right].
\end{align*}
\end{lem}
\begin{proof}
\noindent The variable $t$ is omitted. Now by using Lemma \ref{lem:mik1_smoothing_decomp},
Corollary \ref{cor3:product_w}, Lemma \ref{lem:compose} and Lemma
\ref{lem:kappaGrowth} (i, ii, iii) in order, 
\begin{eqnarray*}
\left|K_{\eta,\mathbf{h}}\right|_{H_{\theta+l,p}^{l+1}} & \leq & C\left\vert \phi_{\eta,\mathbf{h}}\right\vert _{H_{\theta+l,p}^{l}}\\
 & \leq & C\left(1+\left|\nabla\mathbf{\kappa}-I\right|_{H_{\theta+l,p}^{l}}\right)^{2}\left|\mathbf{h}\left(\kappa\right)\right|_{H_{\theta+l,p}^{l}}\\
 & \leq & C\lambda_{t}^{r}\left|\mathbf{h}\right|_{H_{\theta+l,p}^{l}}.
\end{eqnarray*}

\noindent The proof is complete.
\end{proof}

\subsection{Difference Estimates}

In this section, we estimate the difference of spatial inverses. We
write for $i=1,2,$ $\eta_{i}\left(t\right)=\zeta_{i}\left(t\right)+e+\epsilon B_{t}$
and denote their spatial inverses by $\kappa_{i}\left(t\right)$$.$
The exponent of $\lambda_{t}$ will be generically denoted by $r$
and allowed to grow as needed. Due to its simplicity, Lemma \ref{lem:weightC}
will from now be applied without further reference. 
\begin{lem}
\label{Lemma:kappaDiff0} Let $p>d,\,l>\frac{d}{p},\,\theta\in\left(1,d\right).$
Then there exist $C,r>0$ such that for all $\zeta_{1},\zeta_{2}\in\mathcal{A}_{M,T}^{p,l,\theta},$
\[
\left|\kappa_{1}\left(t\right)-\kappa_{2}\left(t\right)\right|_{H_{\theta+l-1,p}^{l}}\leq C\lambda_{t}^{r}\left|\zeta_{1}\left(t\right)-\zeta_{2}\left(t\right)\right|_{H_{\theta+l-1,p}^{l}},\,\left(\omega,t\right)\in\Omega\times\left[0,T\right].
\]

\noindent{}
\end{lem}
\begin{proof}
\noindent The variable $t$ is omitted. We observe the following identity,

\noindent
\begin{align*}
\kappa_{1}-\kappa_{2} & =\left(\kappa_{1}\circ\eta_{2}-\kappa_{1}\circ\eta_{1}\right)\circ\kappa_{2}\\
 & =\left[\int_{0}^{1}\nabla\kappa_{1}\left(a\eta_{2}+\left(1-a\right)\eta_{1}\right)\left(\eta_{2}-\eta_{1}\right)da\right]\circ\kappa_{2}.
\end{align*}

\noindent By changing the variable of integration and Lemma \ref{lem:kappaGrowth}
(i), 
\begin{align*}
 & \left|w^{\sigma-1}\left(\kappa_{1}-\kappa_{2}\right)\right|_{p}\\
\leq & C\lambda_{t}^{\left|\sigma-1\right|}\left|\nabla\kappa_{1}\right|_{\infty}\left|w^{\sigma-1}\left(\eta_{2}-\eta_{1}\right)\right|_{p}\leq C\lambda_{t}^{\left|\sigma-1\right|}\left|w^{\sigma-1}\left(\eta_{2}-\eta_{1}\right)\right|_{p}.
\end{align*}

\noindent Now, we easily check the condition in Corollary \ref{cor:compose-neg}.
Owing to Lemma \ref{lem:kappaGrowth} (i, iii), 
\[
\left|\det\nabla\eta_{2}\right|_{\infty}+\left\Vert \eta_{2}\right\Vert +\left\Vert \kappa_{2}\right\Vert +\left|\nabla\kappa_{2}\right|_{\infty}+\chi_{l\geq2}\left|D^{2}\kappa_{2}\right|_{H_{\theta+l-1,p}^{l-2}}\leq C\lambda_{t}^{r}.
\]

\noindent Therefore, applying Corollary \ref{cor:compose-neg} with
$\delta=\theta-1$ and $\delta^{\prime}=\theta,$
\begin{align*}
\left|\kappa_{1}-\kappa_{2}\right|_{H_{\theta+l-1,p}^{l}} & \leq C\lambda_{t}^{r}\int_{0}^{1}\left|\nabla\kappa_{1}\left(a\eta_{2}+\left(1-a\right)\eta_{1}\right)\left(\eta_{2}-\eta_{1}\right)\right|_{H_{\theta+l-1,p}^{l}}da.\\
 & \leq C\lambda_{t}^{r}\int_{0}^{1}\left|\left(\nabla\kappa_{1}\left(a\eta_{2}+\left(1-a\right)\eta_{1}\right)-I\right)\left(\eta_{2}-\eta_{1}\right)\right|_{H_{\theta+l-1,p}^{l}}da\\
 & +C\lambda_{t}^{r}\left|\eta_{2}-\eta_{1}\right|_{H_{\theta+l-1,p}^{l}}\\
 & =\mathcal{I}_{1}+\mathcal{I}_{2}.
\end{align*}

\noindent Next, we apply Corollary \ref{cor3:product_w} with $l>\frac{d}{p},$
$\delta=\theta-1,\,\delta_{1}=\theta,\,\delta_{2}=\theta-1,$

\noindent
\begin{align*}
\mathcal{I}_{1}\leq & C\lambda_{t}^{r}\int_{0}^{1}\left|\nabla\kappa_{1}\left(a\eta_{2}+\left(1-a\right)\eta_{1}\right)-I\right|_{H_{\theta+l,p}^{l}}\left|\eta_{2}-\eta_{1}\right|_{H_{\theta+l-1,p}^{l}}da.
\end{align*}

\noindent Recalling (\ref{eq:InterInvD}), we have $\left\Vert \left(a\eta_{2}+\left(1-a\right)\eta_{1}\right)^{-1}\right\Vert \leq C\lambda_{t},\,a\in\left[0,1\right].$
In addition, from (\ref{eq:det_Inter}), we have $\left|\det\nabla\left(a\eta_{2}+\left(1-a\right)\eta_{1}\right)^{-1}\right|_{\infty}\leq C.$
Thus, applying Lemma \ref{lem:compose} with $\delta=\delta^{\prime}=\theta$$,$
\[
\mathcal{I}_{1}\leq C\lambda_{t}^{r}\left|\nabla\kappa_{1}-I\right|_{H_{\theta+l,p}^{l}}\left|\eta_{2}-\eta_{1}\right|_{H_{\theta+l-1,p}^{l}}.
\]

\noindent The statement now follows from Lemma \ref{lem:kappaGrowth}
(ii, iii). 
\end{proof}
\begin{rem}
\label{rem:inter_compose} The estimates $\left\Vert \left(a\eta_{2}+\left(1-a\right)\eta_{1}\right)^{-1}\right\Vert \leq C\lambda_{t}$
and $\left|\det\nabla\left(a\eta_{2}+\left(1-a\right)\eta_{1}\right)^{-1}\right|_{\infty}\leq C$
will be needed later.
\end{rem}
We now provide the Lipschitz continuity of $K_{\eta,\mathbf{h}}$
with respect to $\eta$ defined by (\ref{eq:SimplifiedKform}) with
a general $\mathbf{h}$ in place of $\mathbf{h}_{\eta}$. 
\begin{lem}
\label{lem:LipEta} Let $p>d,\,l>1+\frac{d}{p},\,\theta\in\left(1,d\right).$
Then there exist $C,r>0$ such that for all $\zeta_{1},\zeta_{2}\in\mathcal{A}_{M,T}^{p,l,\theta}$
and $\mathbf{h}:\Omega\times\left[0,T\right]\rightarrow H_{\theta+l,p}^{l}\left(\mathbf{R}^{d};\mathbf{R}^{d}\times\mathbf{R}^{d}\right),$
\begin{align*}
 & \left|K_{\eta_{1},\mathbf{h}}\left(t\right)-K_{\eta_{2},\mathbf{h}}\left(t\right)\right|_{H_{\theta+l-1,p}^{l}}\\
 & \leq C\lambda_{t}^{r}\left|\mathbf{h}\left(t\right)\right|_{H_{\theta+l,p}^{l}}\left|\zeta_{1}\left(t\right)-\zeta_{2}\left(t\right)\right|_{H_{\theta+l-1,p}^{l}},\,\left(\omega,t\right)\in\Omega\times\left[0,T\right].
\end{align*}
\end{lem}
\begin{proof}
\noindent The variable $t$ is omitted. Applying Lemma \ref{lem:mik1_smoothing_decomp},

\noindent
\begin{align*}
\left|K_{\eta_{1},\mathbf{h}}-K_{\eta_{2},\mathbf{h}}\right|_{H_{\theta+l-1,p}^{l}} & \leq C\left|\phi_{\eta_{1},\mathbf{h}}-\phi_{\eta_{2},\mathbf{h}}\right|_{H_{\theta+l-1,p}^{l-1}}\\
 & \leq C\left|\left(\nabla\kappa_{1}\right)^{\ast}\mathbf{h}\left(\kappa_{1}\right)\left(\nabla\kappa_{1}-\nabla\kappa_{2}\right)\right|_{H_{\theta+l-1,p}^{l-1}}\\
 & +C\left|\left(\nabla\kappa_{1}\right)^{\ast}\left(\mathbf{h}\left(\kappa_{1}\right)-\mathbf{h}\left(\kappa_{2}\right)\right)\nabla\kappa_{2}\right|_{H_{\theta+l-1,p}^{l-1}}\\
 & +C\left|\left(\left(\nabla\kappa_{1}\right)^{\ast}-\left(\nabla\kappa_{2}\right)^{\ast}\right)\mathbf{h}\left(\kappa_{2}\right)\nabla\kappa_{2}\right|_{H_{\theta+l-1,p}^{l-1}}\\
 & =\mathcal{I}_{1}+\mathcal{I}_{2}+\mathcal{I}_{3}.
\end{align*}

\noindent We estimate $\mathcal{I}_{1}$ and similarly $\mathcal{I}_{3}.$
Applying Corollary \ref{cor3:product_w} with $l-1>\frac{d}{p},$
Lemma \ref{lem:compose} with $l-1\geq0$ and $\delta=\delta^{\prime}=\theta$,
and finally Lemma \ref{lem:kappaGrowth} (i, ii, iii), we derive

\noindent
\begin{align*}
\mathcal{I}_{1} & \leq C\left(\left|\left(\nabla\kappa_{1}\right)^{\ast}-I\right|_{H_{\theta+l-1,p}^{l-1}}+1\right)\left|\mathbf{h}\left(\kappa_{1}\right)\right|_{H_{\theta+l-1,p}^{l-1}}\left|\nabla\kappa_{1}-\nabla\kappa_{2}\right|_{H_{\theta+l-1,p}^{l-1}}\\
 & \leq C\lambda_{t}^{r}\left|\mathbf{h}\right|_{H_{\theta+l-1,p}^{l-1}}\left|\nabla\kappa_{1}-\nabla\kappa_{2}\right|_{H_{\theta+l-1,p}^{l-1}}.
\end{align*}

\noindent We now proceed to estimate $\mathcal{I}_{2}.$ Applying
Corollary \ref{cor3:product_w} with $l-1>\frac{d}{p}$ followed by
Lemma \ref{lem:kappaGrowth} (ii, iii),

\noindent
\begin{align*}
\mathcal{I}_{2} & \leq C\left(1+\left|I-\left(\nabla\kappa_{1}\right)^{\ast}\right|_{H_{\theta+l-1,p}^{l-1}}\right)\left(1+\left|I-\nabla\kappa_{2}\right|_{H_{\theta+l-1,p}^{l-1}}\right)\\
 & \times\int_{0}^{1}\left|\nabla\mathbf{h}\left(a\kappa_{1}+\left(1-a\right)\kappa_{2}\right)\left(\kappa_{1}-\kappa_{2}\right)\right|_{H_{\theta+l-1,p}^{l-1}}da\\
 & \leq C\lambda_{t}^{r}\int_{0}^{1}\left|\nabla\mathbf{h}\left(a\kappa_{1}+\left(1-a\right)\kappa_{2}\right)\left(\kappa_{1}-\kappa_{2}\right)\right|_{H_{\theta+l-1,p}^{l-1}}da.
\end{align*}

\noindent Applying Corollary \ref{cor3:product_w} with $l-1>\frac{d}{p},\,\delta=\theta,\,\delta_{1}=\theta+1,\,\delta_{2}=\theta-1,$

\noindent
\[
\mathcal{\mathcal{I}}_{2}\leq C\lambda_{t}^{r}\int_{0}^{1}\left|\nabla\mathbf{h}\left(a\kappa_{1}+\left(1-a\right)\kappa_{2}\right)\right|_{H_{\theta+l,p}^{l-1}}\left|\kappa_{1}-\kappa_{2}\right|_{H_{\theta+l-2,p}^{l-1}}.
\]
We observe that (\ref{eq:inverseI}) ensures that $a\nabla\kappa_{1}+\text{\ensuremath{\left(1-a\right)}}\nabla\kappa_{2}$
is close to the identity matrix. Indeed, 
\begin{align*}
\left|a\nabla\kappa_{1}+\text{\ensuremath{\left(1-a\right)}}\nabla\kappa_{2}-I\right|_{\infty} & \leq a\left|\nabla\kappa_{1}-I\right|_{\infty}+\left(1-a\right)\left|\nabla\kappa_{2}-I\right|_{\infty}\leq\frac{1}{2d}.
\end{align*}

\noindent Due to Ostrowski's lower bound for determinants, $\det\nabla\left(a\kappa_{1}+\left(1-a\right)\kappa_{2}\right)\geq c>0$.
By the inverse function theorem, $a\kappa_{1}+\text{\ensuremath{\left(1-a\right)}}\kappa_{2}$
has the spatial inverse denoted by $\left(a\kappa_{1}+\text{\ensuremath{\left(1-a\right)}}\kappa_{2}\right)^{-1}$.
By the same calculation as (\ref{eq:eta_inv_db}) and Lemma \ref{lem:kappaGrowth}
(i),

\noindent
\begin{align*}
 & \left\Vert \left(a\kappa_{1}+\left(1-a\right)\kappa_{2}\right)^{-1}\right\Vert \\
 & \leq\left(1+\left(a\kappa_{1}\left(0\right)+\left(1-a\right)\kappa_{2}\left(0\right)\right)\right)\left|\nabla\left(a\kappa_{1}+\left(1-a\right)\kappa_{2}\right)^{-1}\right|_{\infty}\\
 & \leq C\lambda_{t},\,a\in\left[0,1\right].
\end{align*}

\noindent Finally, applying Lemma \ref{lem:compose} with $l-1\geq0,\,\delta=\theta+1,\,\delta^{\prime}=\theta,$
and Lemma \ref{lem:kappaGrowth} (i, iii)
\[
\mathcal{\mathcal{I}}_{2}\leq C\lambda_{t}^{r}\left|\nabla\mathbf{h}\right|_{H_{\theta+l,p}^{l-1}}\left|\kappa_{1}-\kappa_{2}\right|_{H_{\theta+l-2,p}^{l-1}}.
\]

\noindent The proof is completed by Lemma \ref{Lemma:kappaDiff0}.
\end{proof}
The following Lipschitz continuity will also be needed.
\begin{lem}
\label{lem:diff_h} Let $p>d,\,l>1+\frac{d}{p},\,\theta\in\left(1,d\right),\,N>0$
and $\sup_{\left(\omega,t\right)\in\Omega\times\left[0,T\right]}\left|G\left(t\right)\right|_{H_{\theta+l}^{l+1}}\leq N.$
Denote for $i=1,2,$ 
\begin{align*}
\mathbf{h}_{\eta_{i}}\left(t\right)= & \int_{0}^{t}\left(\nabla\eta_{i}\left(s\right)\right)^{\ast}\nabla G\left(s,\eta_{i}\left(s\right)\right)\left(\nabla\eta_{i}\left(s\right)\right)ds,\,\left(\omega,t\right)\in\Omega\times\left[0,T\right].
\end{align*}

\noindent Then there exist $C,r>0$ such that for all $\zeta_{1},\zeta_{2}\in\mathcal{A}_{M,T}^{p,l,\theta},$
\begin{align*}
\left|\mathbf{h}_{\eta_{1}}\left(t\right)-\mathbf{h}_{\eta_{2}}\left(t\right)\right|_{H_{\theta+l-1}^{l-1}} & \leq Ct\lambda_{t}^{r}\left|\zeta_{1}-\zeta_{2}\right|_{\mathbb{\sum}_{\theta+l-1,p}^{l}\left(t\right)},\,\left(\omega,t\right)\in\Omega\times\left[0,T\right].
\end{align*}
\end{lem}
\begin{proof}
\noindent We split the difference as follows:

\noindent
\begin{align*}
 & \left|\mathbf{h}_{\eta_{1}}\left(t\right)-\mathbf{h}_{\eta_{2}}\left(t\right)\right|_{H_{\theta+l-1}^{l-1}}\\
 & \leq\left|\int_{0}^{t}\left(\nabla\eta_{1}\left(s\right)-\nabla\eta_{2}\left(s\right)\right)^{\ast}\nabla G\left(s,\eta_{1}\left(s\right)\right)\left(\nabla\eta_{1}\left(s\right)\right)ds\right|_{H_{\theta+l-1}^{l-1}}\\
 & +\left|\int_{0}^{t}\left(\nabla\eta_{2}\left(s\right)\right)^{\ast}\left(\nabla G\left(s,\eta_{1}\left(s\right)\right)-\nabla G\left(s,\eta_{2}\left(s\right)\right)\right)\left(\nabla\eta_{1}\left(s\right)\right)ds\right|_{H_{\theta+l-1}^{l-1}}\\
 & +\left|\int_{0}^{t}\left(\nabla\eta_{2}\left(s\right)\right)^{\ast}\nabla G\left(s,\eta_{2}\left(s\right)\right)\left(\nabla\eta_{1}\left(s\right)-\nabla\eta_{2}\left(s\right)\right)ds\right|_{H_{\theta+l-1}^{l-1}}\\
 & =\mathcal{I}_{1}+\mathcal{I}_{2}+\mathcal{I}_{3}.
\end{align*}

\noindent We start with $\mathcal{I}_{2}.$ Applying Corollary \ref{cor3:product_w}
with $l-1>\frac{d}{p},$
\begin{align*}
\mathcal{I}_{2} & \leq C\prod_{i=1,2}\left(1+\left|I-\nabla\eta_{i}\right|_{\mathbb{\sum}_{\theta+l-1,p}^{l-1}\left(t\right)}\right)\left|\int_{0}^{t}\left(\nabla G\left(s,\eta_{1}\left(s\right)\right)-\nabla G\left(s,\eta_{2}\left(s\right)\right)\right)ds\right|_{H_{\theta+l-1,p}^{l-1}}\\
 & \leq C\left|\int_{0}^{t}\left(\nabla G\left(s,\eta_{1}\left(s\right)\right)-\nabla G\left(s,\eta_{2}\left(s\right)\right)\right)ds\right|_{H_{\theta+l-1,p}^{l-1}}.
\end{align*}

\noindent By the fundamental theorem of calculus followed by Corollary
\ref{cor3:product_w} with $l-1>\frac{d}{p},$ $\delta=\theta,$ $\delta_{1}=\theta+1,$
$\delta_{2}=\theta-1,$
\[
\mathcal{I}_{2}\leq C\int_{0}^{t}\left|D^{2}G\left(s,a\eta_{1}\left(s\right)+\left(1-a\right)\eta_{2}\right)\right|_{H_{\theta+l,p}^{l-1}}ds\left|\zeta_{1}-\zeta_{2}\right|_{\mathbb{\sum}_{\theta+l-2,p}^{l-1}\left(t\right)}.
\]

\noindent Finally, using Lemma \ref{lem:compose} with $l-1\geq0$,
$\delta=\theta+1,$ $\delta^{\prime}=\theta$, and Remark \ref{rem:inter_compose},
\[
\mathcal{I}_{2}\leq Ct\lambda_{t}^{r}\left|D^{2}G\right|_{\mathbb{\sum}_{\theta+l,p}^{l-1}\left(t\right)}\left|\zeta_{1}-\zeta_{2}\right|_{\mathbb{\sum}_{\theta+l-2,p}^{l-1}\left(t\right)}.
\]

\noindent Now, we now estimate $\mathcal{I}_{1}$ and similarly $\mathcal{I}_{3}$.
Applying Corollary \ref{cor3:product_w} with $l-1>\frac{d}{p},$
\[
\mathcal{I}_{1}\leq C\left|\nabla\zeta_{1}-\nabla\zeta_{2}\right|_{\mathbb{\sum}_{\theta+l-1,p}^{l-1}\left(t\right)}\int_{0}^{t}\left|\nabla G\left(s,\eta_{1}\left(s\right)\right)\right|_{H_{\theta+l-1,p}^{l-1}}ds.
\]

\noindent Finally, using Lemma \ref{lem:compose} with $l-1\geq0$,
$\delta=\theta,$ $\delta^{\prime}=\theta,$
\[
\mathcal{I}_{1}\leq Ct\lambda_{t}^{r}\left|\nabla\zeta_{1}-\nabla\zeta_{2}\right|_{\mathbb{\sum}_{\theta+l-1,p}^{l-1}\left(t\right)}\left|\nabla G\right|_{\sum_{\theta+l-1,p}^{l-1}\left(t\right)}.
\]

\noindent The proof is complete.
\end{proof}

\section{Proof of the Main Theorem}

In this section, we construct a solution of the flow equation via
iteration. Specifically, we will show a contraction in an appropriate
function space of the mapping $\eta\rightarrow K_{\eta,\mathbf{h}_{\eta}}$
given by (\ref{eq:SimplifiedKform}). 

\subsection{Proof of Theorem \ref{thm:main_flow}}

We now prove Theorem \ref{thm:main_flow}. 
\begin{proof}
\noindent We will show the existence for all $\omega\in\Omega$ and
all estimates will be independent of $\omega.$ Fixing $0<T<\infty$,
we consider the mapping $\mathcal{L}:\zeta\rightarrow\int_{0}^{t}\mathbf{E}_{s}^{W}K_{\eta,\mathbf{h}_{\eta}}\left(s,\eta\left(s\right)\right)ds=\tilde{\zeta}\left(t\right),\,t\in\left[0,T\right]$
where 
\begin{align*}
K_{\eta,\mathbf{h_{\eta}}}^{j}\left(t,x\right) & =\int\Gamma_{i}\left(x-z\right)\left[\phi_{\eta,\mathbf{h_{\eta}}}^{ji}\left(t,z\right)-\phi_{\eta,\mathbf{h}_{\eta}}^{ij}\left(t,z\right)\right]dz,
\end{align*}

\noindent
\[
\phi_{\eta,\mathbf{h}}\left(t,x\right)=\left(\nabla\kappa\left(t,x\right)\right)^{\ast}\mathbf{h}\left(t,\kappa\left(t,x\right)\right)\nabla\kappa\left(t,x\right),
\]

\noindent and
\[
\mathbf{h}_{\eta}\left(t,x\right)=\nabla\mathbf{u_{0}}\left(x\right)+\int_{0}^{t}\left(\nabla\eta\left(s,x\right)\right)^{\ast}\nabla G\left(s,\eta\left(s,x\right)\right)\left(\nabla\eta\left(s,x\right)\right)ds,
\]

\noindent for all $\left(\omega,t,x\right)\in\Omega\times\left[0,T\right]\times\mathbf{R}^{d}.$

\vspace{10pt}

\noindent Recall that $\zeta\in\mathcal{A}_{M,T}^{p,l,\theta}$ if
for all $\left(\omega,t\right)\in\Omega\times\left[0,T\right],$ $\zeta\left(t\right)$
is a continuously differentiable and 

\noindent (i) $\left\vert \nabla\zeta\left(t\right)\right\vert _{\infty}\leq\frac{1}{2d^{2}}$,
$\left|\zeta\left(t,0\right)\right|\leq M,$

\noindent (ii) $\left|\zeta\left(t\right)\right|_{H_{\theta+l,p}^{l+1}}\leq M.$ 

\vspace{10pt}

\noindent We mention that by Lemma \ref{lem:kappaGrowth} (i), $\left\Vert \kappa\left(s\right)\right\Vert \leq C\lambda_{s}$
which will be used several times. Applying Corollary \ref{cor3:product_w}
and Lemma \ref{lem:compose} with $\delta=\delta^{\prime}=\theta,$ 

\noindent
\begin{align}
\left|\mathbf{h}_{\eta}\left(s\right)\right|_{H_{\theta+l,p}^{l}} & \leq\left|\nabla\mathbf{u}_{0}\right|_{H_{\theta+l,p}^{l}}+Cs\lambda_{s}^{r}\left|\nabla G\right|_{\mathbb{\sum}_{\theta+l,p}^{l}\left(s\right)}\leq C\lambda_{s}^{r}.\label{eq:bound_h}
\end{align}

\noindent Therefore, by Lemma \ref{lem:K_growth}, 
\begin{equation}
\left|K_{\eta,\mathbf{h}_{\eta}}\left(s\right)\right|_{H_{\theta+l,p}^{l+1}}\leq C\lambda_{s}^{r}\left|\mathbf{h}_{\eta}\left(s\right)\right|_{H_{\theta+l,p}^{l}}\leq C\lambda_{s}^{r},\label{eq:uniform_K}
\end{equation}

\noindent and by Corollary \ref{cor:compose-neg} with $l+1\geq0$,
$\delta=\theta-1$, $\delta^{\prime}=\theta,$ 
\begin{align}
\mathbf{E}_{s}^{W}\left|K_{\eta,\mathbf{h}_{\eta}}\left(s,\eta\left(s\right)\right)\right|_{H_{\theta+l,p}^{l+1}} & \leq C\mathbf{E}_{s}^{W}\lambda_{s}^{r}\leq C.\label{eq:EK}
\end{align}

\noindent Hence, 

\noindent
\begin{align*}
\left|\tilde{\zeta}\left(t\right)\right|_{H_{\theta+l,p}^{l+1}} & \leq Ct.
\end{align*}

\noindent By Sobolev embedding theorem, we have $\left|\tilde{\zeta}\left(t,0\right)\right|\leq Ct$,
$\left|\nabla\tilde{\zeta}\left(t\right)\right|_{\infty}\leq Ct$
and $\tilde{\zeta}$ is continuously differentiable. We emphasize
that $C$ depends on $M$ and $T$ but is independent of $\zeta.$
Therefore, fixing $M>0$ and making $T$ smaller i.e., $CT\leq M,$
then $\mathcal{L}:\zeta\rightarrow\int_{0}^{t}\mathbf{E}_{s}^{W}K_{\eta,\mathbf{h}_{\eta}}\left(s,\eta\left(s\right)\right)ds=\tilde{\zeta}\left(t\right)$
maps $\mathcal{A}_{M,T,l}^{p,l,\theta}$ into $\mathcal{A}_{M,T,l}^{p,l,\theta}.$

\noindent Next, we show the contraction in the weaker $\sum_{\theta+l-1,p}^{l}\left(T\right)$
norm. We assume that $\zeta,\tilde{\zeta}\in\mathcal{A}_{M,T}^{p,l,\theta}$
are $\mathbb{F}^{W}-$adapted.
\begin{align*}
 & \int_{0}^{t}\left|\mathbf{E}_{s}^{W}K_{\eta,\mathbf{h}_{\eta}}\left(s,\eta\left(s\right)\right)-\mathbf{E}_{s}^{W}K_{\tilde{\eta},\mathbf{h}_{\tilde{\eta}}}\left(s,\tilde{\eta}\left(s\right)\right)\right|_{H_{\theta+l-1,p}^{l}}ds\\
\leq & \int_{0}^{t}\mathbf{E}_{s}^{W}\left|K_{\eta,\mathbf{h}_{\eta}}\left(s,\eta\left(s\right)\right)-K_{\eta,\mathbf{h}_{\eta}}\left(s,\tilde{\eta}\left(s\right)\right)\right|_{H_{\theta+l-1,p}^{l}}ds\\
+ & \int_{0}^{t}\mathbf{E}_{s}^{W}\left|K_{\eta,\mathbf{h}_{\eta}}\left(s,\tilde{\eta}\left(s\right)\right)-K_{\tilde{\eta},\mathbf{h}_{\tilde{\eta}}}\left(s,\tilde{\eta}\left(s\right)\right)\right|_{H_{\theta+l-1,p}^{l}}ds\\
= & \int_{0}^{t}\mathcal{I}_{1}\left(s\right)ds+\int_{0}^{t}\mathcal{I}_{2}\left(s\right)ds.
\end{align*}

\noindent Estimate of $\mathcal{I}_{1}$: Applying Corollary \ref{cor3:product_w}
with $\delta=\theta-1$, $\delta_{1}=\theta,$ $\delta_{2}=\theta-1,$ 

\noindent
\begin{align*}
\mathcal{I}_{1}\left(s\right) & =\mathbf{E}_{s}^{W}\left|K_{\eta,\mathbf{h}_{\eta}}\left(s,\eta\left(s\right)\right)-K_{\eta,\mathbf{h}_{\eta}}\left(s,\tilde{\eta}\left(s\right)\right)\right|_{H_{\theta+l-1,p}^{l}}\\
\leq & \mathbf{E}_{s}^{W}\left|\int_{0}^{1}\nabla K_{\eta,\mathbf{h}_{\eta}}\left(s,a\eta\left(s\right)+\left(1-a\right)\tilde{\eta}\left(s\right)\right)\left(\eta\left(s\right)-\tilde{\eta}\left(s\right)\right)da\right|_{H_{\theta+l-1,p}^{l}}\\
\leq & C\int_{0}^{1}\mathbf{E}_{s}^{W}\left|\nabla K_{\eta,\mathbf{h}_{\eta}}\left(s,a\eta\left(s\right)+\left(1-a\right)\tilde{\eta}\left(s\right)\right)\right|_{H_{\theta+l,p}^{l}}da\ensuremath{\left|\eta\left(s\right)-\tilde{\eta}\left(s\right)\right|_{H_{\theta+l-1,p}^{l}}}.
\end{align*}

\noindent By Lemma \ref{lem:compose} with $\delta=\delta^{\prime}=\theta$,
(\ref{eq:uniform_K}), and Remark \ref{rem:inter_compose}, 
\[
\mathcal{I}_{1}\left(s\right)\leq C\left|\zeta\text{\ensuremath{\left(s\right)}}-\tilde{\zeta}\left(s\right)\right|_{H_{\theta+l-1,p}^{l}}.
\]
 Estimate of $\mathcal{I}_{2}$: By Corollary \ref{cor:compose-neg}
with $\delta=\theta-1,$ $\delta^{\prime}=\theta$, 

\noindent
\begin{align*}
\mathcal{I}_{2}\left(s\right) & =\mathbf{E}_{s}^{W}\left|\left(K_{\eta,\mathbf{h}_{\eta}}\left(s\right)-K_{\tilde{\eta},\mathbf{h}_{\tilde{\eta}}}\left(s\right)\right)\circ\tilde{\eta}\left(s\right)\right|_{H_{\theta+l-1,p}^{l}}\\
 & \leq\mathbf{E}_{s}^{W}\left|K_{\eta,\mathbf{h}_{\eta}-\text{\ensuremath{\mathbf{h}_{\tilde{\eta}}}}}\left(s\right)\circ\tilde{\eta}\left(s\right)\right|_{H_{\theta+l-1,p}^{l}}+\mathbf{E}_{s}^{W}\left|\left(K_{\eta,\mathbf{h}_{\tilde{\eta}}}\left(s\right)-K_{\tilde{\eta},\mathbf{h}_{\tilde{\eta}}}\left(s\right)\right)\circ\tilde{\eta}\left(s\right)\right|_{H_{\theta+l-1,p}^{l}}\\
 & \leq C\mathbf{E}_{s}^{W}\left[\lambda_{s}^{r}\left|K_{\eta,\mathbf{h}_{\eta}-\text{\ensuremath{\mathbf{h}_{\tilde{\eta}}}}}\left(s\right)\right|_{H_{\theta+l-1,p}^{l}}\right]+C\mathbf{E}_{s}^{W}\left[\lambda_{s}^{r}\left|K_{\eta,\mathbf{h}_{\tilde{\eta}}}\left(s\right)-K_{\tilde{\eta},\mathbf{h}_{\tilde{\eta}}}\left(s\right)\right|_{H_{\theta+l-1,p}^{l}}\right]\\
 & =\mathcal{I}_{21}\left(s\right)+\mathcal{I}_{22}\left(s\right).
\end{align*}

\noindent Applying Lemma \ref{lem:K_growth} with $l-1>\frac{d}{p},$
\begin{align*}
\mathcal{I}_{21}\left(s\right) & \leq C\mathbf{E}_{s}^{W}\left[\lambda_{s}^{r}\left|\mathbf{h}_{\eta}\left(s\right)-\mathbf{h}_{\tilde{\eta}}\left(s\right)\right|_{H_{\theta+l-1,p}^{l-1}}\right].
\end{align*}

\noindent Owing to Lemma \ref{lem:diff_h}, $\mathcal{I}_{21}\left(s\right)\leq C\left|\zeta-\tilde{\zeta}\right|_{\Sigma_{\theta+l-1,p}^{l}\left(s\right)}.$

\noindent By Lemma \ref{lem:LipEta} and (\ref{eq:bound_h}), 
\begin{align*}
\mathcal{I}_{22}\left(s\right) & \leq C\left|\zeta\left(s\right)-\tilde{\zeta}\left(s\right)\right|_{H_{\theta+l-1,p}^{l}}.
\end{align*}

\noindent Combining estimates of $\mathcal{I}_{1}$ and $\mathcal{I}_{2}$,
we derive 
\begin{equation}
\left|\mathcal{L}\left(\zeta\right)-\mathcal{L}\left(\tilde{\zeta}\right)\right|_{\Sigma_{\theta+l-1,p}^{l}\left(T\right)}\leq C\left|\zeta-\tilde{\zeta}\right|_{\Sigma_{\theta+l-1,p}^{l}\left(T\right)}.\label{eq:contraction}
\end{equation}

\noindent By a standard successive iteration starting with $\zeta^{\left(0\right)}\left(t\right)=0,$
\[
\zeta^{\left(n+1\right)}\left(t\right)=\int_{0}^{t}\mathbf{E}_{s}^{W}K_{\eta^{\left(n\right)},\mathbf{h}_{\eta^{\left(n\right)}}}\left(s,\eta^{\left(n\right)}\left(s\right)\right)ds,\,t\in\left[0,T\right],
\]
there exists $\zeta^{\ast}\in\mathbb{\sum}_{\theta+l-1,p}^{l}\left(T\right)$
such that 
\[
\zeta^{\ast}\left(t\right)=\int_{0}^{t}\mathbf{E}_{s}^{W}K_{\eta^{\ast},\mathbf{h}_{\eta^{\ast}}}\left(s,\eta^{\ast}\left(s\right)\right)ds
\]
as an equality in $H_{\theta+l-1,p}^{l}\left(\mathbf{R}^{d};\mathbf{R}^{d}\right).$
Owing to Lemma \ref{lem:weaker_norm}, $\zeta^{\ast}\in\mathbb{\sum}_{\theta+l,p}^{l+1}\left(T\right).$
Finally, we show that $\zeta\in C\left(\left[0,T\right],H_{\theta+l,p}^{l+1}\left(\mathbf{R}^{d};\mathbf{R}^{d}\right)\right).$
Clearly, by passing to the limit, $\zeta^{\ast}\in\mathcal{A}_{M,T}^{p,l,\theta}$
possibly with a larger $M$. Hence, by (\ref{eq:EK}), 
\begin{align*}
\left|\zeta^{\ast}\left(t\right)-\zeta^{\ast}\left(t^{\prime}\right)\right|_{H_{\theta+l,p}^{l+1}} & \leq\int_{t^{\prime}}^{t}\mathbf{E}_{s}^{W}\left|K_{\eta^{\ast},\mathbf{h}_{\eta^{\ast}}}\left(s,\eta^{\ast}\left(s\right)\right)\right|_{H_{\theta+l,p}^{l+1}}ds\\
 & \leq C\left(t-t^{\prime}\right).
\end{align*}

\noindent The proof is complete.
\end{proof}

\subsection{Proof of Theorem \ref{thm:main_velocity}}

We now prove Theorem \ref{thm:main_velocity}. 
\begin{proof}
We start by checking the assumptions of Lemma \ref{lem:simplified_K}.

\noindent (i) By applying Corollary \ref{cor3:product_w} and Lemma
\ref{lem:compose}, 
\[
\left(\nabla\mathbf{\kappa}\left(t\right)\right)^{\ast}\mathbf{g}_{\eta}\left(t,\mathbf{\kappa}\left(t\right)\right)\in H_{\theta+l,p}^{l}\left(\mathbf{R}^{d};\mathbf{R}^{d}\right),\,\left(\omega,t\right)\in\Omega\times\left[0,T\right].
\]

\noindent which also justifies condition (i) of Lemma \ref{lem:flow1}.

\noindent (ii) follows from differentiability of $\zeta\left(t\right)$
and Lemma \ref{lem:kappaGrowth}. 

\noindent (iii) follows from continuity of $\left|\zeta\left(t\right)\right|_{H_{\theta+l,p}^{l+1}}$
and Sobolev embedding theorem. 

\noindent Hence, by (\ref{eq:S2K}) in Lemma \ref{lem:simplified_K},
\[
\mathcal{S}\left(\nabla\mathbf{\kappa}\left(t\right)\right)^{\ast}\mathbf{g}_{\eta}\left(t,\mathbf{\kappa}\left(t\right)\right)=K_{\eta,\mathbf{h}_{\eta}}\left(t\right),\,\left(\omega,t\right)\in\Omega\times\left[0,T\right].
\]
We let
\[
\mathbf{u}\left(t\right)=\mathbf{E}_{t}^{W}\mathcal{S}\left(\nabla\mathbf{\kappa}\left(t\right)\right)^{\ast}\mathbf{g}_{\eta}\left(t,\mathbf{\kappa}\left(t\right)\right)=\mathbf{E}_{t}^{W}K_{\eta,\mathbf{h}_{\eta}}\left(t\right),\,\left(\omega,t\right)\in\Omega\times\left[0,T\right].
\]

\noindent By (\ref{eq:uniform_K}), 
\[
\sup_{\left(\omega,t\right)\in\Omega\times\left[0,T\right]}\left|\mathbf{u}\left(t\right)\right|_{H_{\theta+l,p}^{l+1}}\leq\sup_{\left(\omega,t\right)\in\Omega\times\left[0,T\right]}\mathbf{E}_{t}^{W}\left|K_{\eta,\mathbf{h}_{\eta}}\left(t\right)\right|_{H_{\theta+l,p}^{l+1}}\leq N<\infty.
\]
Because $l+1>3+\frac{d}{p},$ we have by Sobolev embedding theorem
that 
\[
\sup_{\left(\omega,t\right)\in\Omega\times\left[0,T\right]}\left|\mathbf{u}\left(t\right)\right|_{\mathcal{C}^{3+\alpha}}\leq N<\infty
\]
for some $\alpha\in\left(0,1\right].$ Furthermore, the above inequality
also justifies condition (ii) of Lemma \ref{lem:flow1}. Therefore,
applying Lemma \ref{lem:flow1}, $\mathbf{u}\left(t\right)$ solves
(\ref{eq:LinSNSE}) as an equality in $H_{\theta+l-2,p}^{l-2}\left(\mathbf{R}^{d};\mathbf{R}^{d}\right).$ 

\noindent By Corollary \ref{cor3:product_w} with $\delta=\theta,\,\delta_{1}=\theta,\,\delta_{2}=\theta-1,$
\[
\left(\nabla\mathbf{u}\left(t\right)\right)^{\ast}\mathbf{u}\left(t\right)=\frac{1}{2}\nabla\left|\mathbf{u}\left(t\right)\right|^{2}\in H_{\theta+l,p}^{l}\left(\mathbf{R}^{d};\mathbf{R}^{d}\right).
\]
Thus, $\left(\nabla\mathbf{u}\left(t\right)\right)^{\ast}\mathbf{u}\left(t\right)$
disappears under the Solenoidal projection in $H_{\theta+l-2,p}^{l-2}\left(\mathbf{R}^{d};\mathbf{R}^{d}\right).$
Therefore, $\mathbf{u}\left(t\right)$ solves (\ref{eq:NSE-cor})
as an equality in $H_{\theta+l-2,p}^{l-2}\left(\mathbf{R}^{d};\mathbf{R}^{d}\right).$ 
\end{proof}

\section{Appendix}

\subsection{Weaker Norm}

We mention a result on convergence of functions in $H_{\delta,p}^{l}\left(\mathbf{R}^{d};\mathbf{R}^{d}\right)$
which is needed for constructing a solution. 
\begin{lem}
\label{lem:weaker_norm} Let $p>1,\,l\geq0,\,\delta\geq0.$ If $\zeta_{n}\rightarrow\zeta$
in $H_{\delta,p}^{l}\left(\mathbf{R}^{d};\mathbf{R}^{d}\right)$ and
$\left|\zeta_{n}\right|_{H_{\delta+1,p}^{l+1}}\leq M<\infty$ for
all $n\geq1$ then $\zeta\in H_{\delta+1,p}^{l+1}\left(\mathbf{R}^{d};\mathbf{R}^{d}\right).$
\end{lem}
\begin{proof}
\noindent Without loss of generality, we assume that $\zeta_{n},\zeta\in H_{\delta,p}^{l}\left(\mathbf{R}^{d};\mathbf{R}\right)$
instead of $H_{\delta,p}^{l}\left(\mathbf{R}^{d};\mathbf{R}^{d}\right).$
The idea is to show that $w^{\sigma}\partial^{l+1}\zeta\in L_{p}\left(\mathbf{R}^{d}\right)$
where $\sigma=\delta+1-\frac{d}{p}.$ Let $\gamma\in\mathbf{N}_{0}^{d}$
with $\left|\gamma\right|=l$ and $1\leq i\leq d$.

\noindent By a standard argument, we take $\varphi\in C_{0}^{\infty}\left(\mathbf{R}^{d}\right)$
then on some subsequence of $\left\{ \zeta_{n}\right\} $ there exist
$f,g\in L_{p}\left(\mathbf{R}^{d}\right)$ such that

\noindent
\begin{align*}
\int\left(w^{\sigma}\partial_{i}\partial^{\gamma}\zeta_{n}\right)\left(x\right)\varphi\left(x\right)dx & \rightarrow\int f\left(x\right)\varphi\left(x\right)dx\\
\int\left(w^{\sigma-1}\partial_{i}w\partial^{\gamma}\zeta_{n}\right)\left(x\right)\varphi\left(x\right)dx & \rightarrow\int g\left(x\right)\varphi\left(x\right)dx.
\end{align*}

\noindent On the other hand, using integration by parts,

\noindent
\begin{align*}
 & -\int\left(w^{\sigma}\partial_{i}\partial^{\gamma}\zeta_{n}\right)\left(x\right)\varphi\left(x\right)+\left(\sigma w^{\sigma-1}\partial_{i}w\partial^{\gamma}\zeta_{n}\right)\left(x\right)\varphi\left(x\right)dx\\
 & =\int\left(w^{\sigma}\partial^{\gamma}\zeta_{n}\right)\left(x\right)\partial_{i}\varphi\left(x\right)dx.
\end{align*}

\noindent Taking limit $n\rightarrow\infty$, 
\begin{align*}
-\int f\left(x\right)\varphi\left(x\right)dx-\sigma\int g\left(x\right)\varphi\left(x\right)dx & =\int\left(w^{\sigma}\partial^{\gamma}\zeta\right)\left(x\right)\partial_{i}\varphi\left(x\right)dx.
\end{align*}

\noindent Therefore, $w^{\sigma}\partial^{\gamma}\zeta$ has a weak
derivative, namely $f+\sigma g\in L_{p}\left(\mathbf{R}^{d}\right)$.
Now because $w^{\sigma-1}\partial^{\gamma}\zeta\in L_{p}\left(\mathbf{R}^{d}\right)$,
we conclude that $w^{\sigma}\partial_{i}\partial^{\gamma}\zeta\in L_{p}\left(\mathbf{R}^{d}\right)$.
Since $\gamma,i$ are arbitrary, the conclusion follows.
\end{proof}

\subsection{Weight Function}

Let $B_{r}\left(x\right)=\left\{ y\in\mathbf{R}^{d}:\left\vert x-y\right\vert <r\right\} ,$$x\in\mathbf{R}^{d},B_{r}=B_{r}\left(0\right),r>0$.
Let $p>1,\frac{1}{p}+\frac{1}{q}=1$. We say that a non-negative function
$w$ on $\mathbf{R}^{d}$ is of class $A_{p}$ if 
\[
\left(\frac{1}{\left\vert B_{R}\right\vert }\int_{B_{R}}w\left(x+y\right)dy\right)\left(\frac{1}{\left\vert B_{R}\right\vert }\int_{B_{R}}w\left(x+y\right)^{-q/p}dy\right)^{p/q}\leq C,\,x\in\mathbf{R}^{d},\,R>0.
\]

We show an important property of the weight function $w\left(x\right)=\left(1+\left\vert x\right\vert ^{2}\right)^{1/2},x\in\mathbf{R}^{d}.$
\begin{lem}
\label{al1}Let $p>1,\,d\geq1,\,\alpha\in\left(-d,d\left(p-1\right)\right),$
then $w^{\alpha}\in A_{p}$.
\end{lem}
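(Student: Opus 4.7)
\textbf{Proof plan for Lemma \ref{al1}.} The plan is to verify the $A_p$ condition directly by comparing $w(x) = (1+|x|^2)^{1/2}$ with the classical power weight $|x|$ and splitting the analysis into a ``far from origin'' regime, where $w(x+y)$ is essentially constant on $B_R$, and a ``near origin'' regime, where the ball sees the singular behavior of a power weight at $0$. The sharp range $\alpha \in (-d, d(p-1))$ is dictated precisely by local integrability at the origin of $|z|^\alpha$ and of $|z|^{-\alpha q/p}$.

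\emph{Step 1: Far from origin ($|x| > 2R$).} For any $y \in B_R$ one has $|x|/2 \le |x+y| \le 3|x|/2$, hence $w(x+y) \asymp w(x)$ with constants independent of $x,R$. Both averaged integrals in the $A_p$ quotient are then comparable to $w(x)^\alpha$ and $w(x)^{-\alpha q/p}$ respectively, and the product is bounded by a universal constant.

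\emph{Step 2: Near origin ($|x| \le 2R$).} Then $B_R(x) \subset B_{3R}(0)$, so I bound each factor by the corresponding integral on $B_{3R}$. For $R \le 1$ the weight $w$ is bounded above and below by constants on $B_{3R}$, so everything is $O(1)$. For $R > 1$, I split $B_{3R} = B_1 \cup (B_{3R}\setminus B_1)$: on $B_1$ the integrand is bounded, and on $B_{3R}\setminus B_1$ I use $w(z) \asymp |z|$ together with the elementary computation
\[
\int_{B_{3R}\setminus B_1} |z|^\beta\,dz \asymp R^{d+\beta} \quad \text{if } \beta > -d,
\]
applied with $\beta = \alpha$ and $\beta = -\alpha q/p$; the two integrability conditions $\alpha > -d$ and $-\alpha q/p > -d$ are exactly $-d < \alpha < d(p-1)$. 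Dividing by $|B_R| \asymp R^d$ and forming the $A_p$ product gives
\[
R^{\alpha}\cdot \bigl(R^{-\alpha q/p}\bigr)^{p/q} = R^{\alpha-\alpha} = 1,
\]
plus lower-order contributions from the $B_1$ piece, all of which stay uniformly bounded.

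\emph{Main obstacle.} There is no serious analytic obstacle: the only delicate point is tracking that the constants in Step 2 remain independent of $x$ and $R$ when passing from $w$ to the model power weight, and that the range of $\alpha$ emerging from the integrability conditions at the origin matches the claimed interval. Combining Steps 1 and 2 yields a uniform bound on the $A_p$ quantity, completing the proof.
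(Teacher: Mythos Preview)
Your proposal is correct and follows essentially the same approach as the paper: both split into the regime where $w(x+y)\asymp w(x)$ on $B_R$ (far from origin) and the regime where $B_R(x)\subset B_{3R}(0)$ (near origin), then reduce to the elementary radial computation $\int_{B_{3R}} w^\beta \asymp R^{d+\beta}$ for $\beta>-d$. The only cosmetic difference is the order of the case split---you split first on $|x|>2R$ versus $|x|\le 2R$ and then on $R$, whereas the paper first treats $R\le 1$ for all $x$ via the multiplicative inequality $\tfrac12 w(x)w(y)^{-1}\le w(x+y)\le 2w(x)w(y)$ and afterwards splits on $|x|$ for $R>1$---but the estimates and the mechanism by which the range $\alpha\in(-d,d(p-1))$ enters are identical.
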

\begin{rem}
$d-\frac{\alpha q}{p}>0$ is equivalent to $\alpha<d\left(p-1\right)$.
Also, for $\theta\in\left(1,d\right)$, $\alpha=\theta-\frac{d}{p}\in\left(-d,d\left(p-1\right)\right).$
\end{rem}
\begin{proof}
For each $\alpha\in\mathbf{R}$, 
\[
\frac{1}{\left\vert B_{R}\right\vert }\int_{B_{R}}w\left(y\right)^{\alpha}dy\leq CR^{-d}\int_{0}^{R}\left(1+r\right)^{\alpha}r^{d}\frac{dr}{r}\leq C\text{ if }R\in\left(0,1\right].
\]

\noindent If $R>1$ then 
\begin{eqnarray*}
\int_{0}^{R}\left(1+r\right)^{\alpha}r^{d}\frac{dr}{r} & = & \int_{0}^{1}\left(1+r\right)^{\alpha}r^{d}\frac{dr}{r}+\int_{1}^{R}\left(1+r\right)^{\alpha}r^{d}\frac{dr}{r}\\
 & \leq & CR^{d+\alpha}\text{ if }d+\alpha>0.
\end{eqnarray*}

\noindent Therefore, for $R>1,$ noting $d+\alpha>0,$ 
\begin{equation}
\frac{1}{\left\vert B_{R}\right\vert }\int_{B_{R}}w\left(y\right)^{\alpha}dy\leq CR^{\alpha}.\label{fa2}
\end{equation}

\noindent Hence, for $R>1,$ noting $d+\alpha>0$ and $d-\frac{\alpha q}{p}>0,$

\begin{equation}
\left(\frac{1}{\left\vert B_{R}\right\vert }\int_{B_{R}}w\left(y\right)^{\alpha}dy\right)\left(\frac{1}{\left\vert B_{R}\right\vert }\int_{B_{R}}w\left(y\right)^{-\alpha q/p}dy\right)^{p/q}\leq C.\label{eq:onlyy_1}
\end{equation}
Consider now
\[
\frac{1}{\left\vert B_{R}\left(x\right)\right\vert }\int_{B_{R}\left(x\right)}w\left(y\right)^{\alpha}dy=\frac{1}{\left\vert B_{R}\right\vert }\int_{B_{R}}w\left(x+y\right)^{\alpha}dy,\,x\in\mathbf{R}^{d}.
\]
Since
\[
\frac{1}{2}w\left(x\right)w\left(y\right)^{-1}\leq w\left(x+y\right)\leq2w\left(x\right)w\left(y\right),x,y\in\mathbf{R}^{d},
\]
it follows that
\[
\frac{1}{\left\vert B_{R}\right\vert }\int_{B_{R}}w\left(x+y\right)^{\alpha}dy\leq2^{\alpha}w\left(x\right)^{\alpha}\frac{1}{\left\vert B_{R}\right\vert }\int_{B_{R}}w\left(y\right)^{\alpha}dy\text{ if }\alpha\geq0,
\]
and
\[
\frac{1}{\left\vert B_{R}\right\vert }\int_{B_{R}}w\left(x+y\right)^{\alpha}dy\leq2^{-\alpha}w\left(x\right)^{\alpha}\frac{1}{\left\vert B_{R}\right\vert }\int_{B_{R}}w\left(y\right)^{-\alpha}dy\text{ if }\alpha<0.
\]
Hence, for $R\in\left(0,1\right]$,
\begin{eqnarray}
 &  & \left(\frac{1}{\left\vert B_{R}\right\vert }\int_{B_{R}}w\left(x+y\right)^{\alpha}dy\right)\left(\frac{1}{\left\vert B_{R}\right\vert }\int_{B_{R}}w\left(x+y\right)^{-\alpha q/p}dy\right)^{p/q}\nonumber \\
 & \leq & C\left(\frac{1}{\left\vert B_{R}\right\vert }\int_{B_{R}}w\left(y\right)^{\left\vert \alpha\right\vert }dy\right)\left(\frac{1}{\left\vert B_{R}\right\vert }\int_{B_{R}}w\left(y\right)^{\left|\alpha\right|q/p}dy\right)^{p/q}\leq C.\label{eq:onlyy_2}
\end{eqnarray}
Let $R>1,\left\vert x\right\vert >2R$. Then with $\left\vert y\right\vert \leq R$
we have $2\left\vert x\right\vert \geq\left\vert y+x\right\vert \geq\frac{1}{2}\left\vert x\right\vert $.
Hence for each $\alpha\in\mathbf{R},$ there is $C>0$ so that
\[
\frac{1}{\left\vert B_{R}\right\vert }\int_{B_{R}}w\left(x+y\right)^{\alpha}dy\leq Cw\left(x\right)^{\alpha}.
\]

\noindent The conclusion follows from (\ref{eq:onlyy_1}) and (\ref{eq:onlyy_2}).

\noindent Let $R>1,\left\vert x\right\vert \leq2R$. By (\ref{fa2}),
\begin{eqnarray*}
\frac{1}{\left\vert B_{R}\right\vert }\int_{B_{R}}w\left(x+y\right)^{\alpha}dy & \leq & C\frac{1}{\left\vert B_{3R}\right\vert }\int_{\left\vert x+y\right\vert \leq3R}w\left(x+y\right)^{\alpha}dy\\
 & \leq & CR^{\alpha}.
\end{eqnarray*}
Hence,
\[
\left(\frac{1}{\left\vert B_{R}\right\vert }\int_{B_{R}}w\left(x+y\right)^{\alpha}dy\right)\left(\frac{1}{\left\vert B_{R}\right\vert }\int_{B_{R}}w\left(x+y\right)^{-\alpha q/p}dy\right)^{\frac{p}{q}}\leq C,\,x\in\mathbf{R}^{d},\,R>0.
\]
\end{proof}

\subsection{Newton Potential Estimates}

We investigate some fundamental properties of Newton potential in
weighted Sobolev spaces. 
\begin{lem}
\label{Lem:lle1} Let $p>1,\,$$\Delta u=0$ and $u\in H_{\delta,p}^{0}\left(\mathbf{R}^{d}\right)$
with $\delta\geq0$. Then $u=0$.
\end{lem}
\begin{proof}
Let $u_{1},u_{2}\in H_{\delta,p}^{0}\left(\mathbf{R}^{d}\right)$
be the solutions of $\Delta u=0.$ Let $v=u_{1}-u_{2}$ and 
\[
v_{\varepsilon}=v\ast\varphi_{\varepsilon}
\]
with 
\[
\varphi_{\varepsilon}\left(x\right)=\varepsilon^{-d}\varphi\left(x/\varepsilon\right),\hspace{1em}x\in\mathbf{R}^{d}\text{,}
\]
and $\varphi\in C_{0}^{\infty}\left(\mathbf{R}^{d}\right),\varphi\geq0,\int\varphi=1$.

\noindent As a bounded harmonic function $v_{\epsilon}=c.$ By definition,
$cw^{\delta-d/p}\in L_{p}\left(\mathbf{R}^{d}\right)$ and thus $c=0.$
\end{proof}
For $f\in C_{0}^{\infty}\left(\mathbf{R}^{d}\right)$ we denote $N\left(f\right)$
the Newton potential of $f:$
\[
N\left(f\right)=\int\Gamma\left(\cdot-y\right)f\left(y\right)dy.
\]

\begin{lem}
\label{Lem:lle2}Let $p>1,d\geq2,\theta\in\left(1,d\right)$. Then
there exists $C>0$ such that for all $f\in H_{\theta,p}^{0}\left(\mathbf{R}^{d}\right)$
and $u=\nabla N\left(f\right)$, i.e., $u\left(x\right)=\int\nabla\Gamma\left(x-y\right)f\left(y\right)dy,\,x\in\mathbf{R}^{d},$

\noindent
\[
\left\vert u\right\vert _{H_{\theta,p}^{1}\left(\mathbf{R}^{d}\right)}\leq C\left\vert f\right\vert _{H_{\theta,p}^{0}}.
\]
\end{lem}
\begin{proof}
Let $f\in C_{0}^{\infty}\left(\mathbf{R}^{d}\right),\psi=N\left(f\right)$
and 

\noindent
\[
u\left(x\right)=\nabla\psi\left(x\right)=\int\nabla\Gamma\left(x-y\right)f\left(y\right)dy,\hspace{1em}x\in\mathbf{R}^{d}.
\]

\noindent By the estimate of \citep[Theorem 9.9]{gt}, for each $p>1$
there exists $C>0$ such that
\[
\left\vert \nabla u\right\vert _{p}\leq C\left\vert f\right\vert _{p}.
\]

\noindent In particular, $\left|\nabla u\right|_{2}\leq C\left|f\right|_{2}$.
Also, it is straightforward to verify that $\left|D^{\alpha}D^{2}\Gamma\left(x\right)\right|\leq\left|x\right|^{-d-\alpha}$
for all $\left|x\right|\neq0$ and $\left|\alpha\right|\leq1$. Hence,
by Lemma \ref{al1}, \citep[Theorem 2, Chapter V.4]{st} and \citep[Theorem 1, Chapter V.3]{st},
for each $\theta\in\left(1,d\right)$ and $p>1$ there is $C>0$ so
that

\noindent
\[
\left\vert w^{\theta-d/p}\nabla u\right\vert _{p}\leq C\left\vert w^{\theta-d/p}f\right\vert _{p}.
\]

\noindent We will now apply generalized Hardy-Littlewood inequality
(see e.g. \citep[Theorem 1.3.5]{s}) to show that
\begin{equation}
\left\vert w^{\theta-d/p-1}u\right\vert _{p}\leq C\left\vert w^{\theta-d/p}f\right\vert _{p}.\label{eq:u}
\end{equation}

\noindent Consider, 
\begin{align*}
\left|w^{\theta-d/p-1}\left(x\right)u\left(x\right)\right| & =C\left|\int\frac{w\left(x\right)^{\theta-d/p-1}}{w\left(y\right)^{\theta-d/p}}\frac{x_{i}-y_{i}}{\left|x-y\right|^{d}}w\left(y\right)^{\theta-d/p}f\left(y\right)dy\right|\\
 & \leq C\int\frac{w\left(x\right)^{\theta-d/p-1}}{w\left(y\right)^{\theta-d/p}}\left|x-y\right|^{1-d}w\left(y\right)^{\theta-d/p}\left|f\left(y\right)\right|dy.
\end{align*}

\noindent Case i. $\theta-\frac{d}{p}-1\geq0$,

\[
\frac{w\left(x\right)^{\theta-d/p-1}}{w\left(y\right)^{\theta-d/p}}\leq C\left(\left|x\right|^{\theta-d/p-1}\left|y\right|^{-\left(\theta-d/p\right)}+\left|y\right|^{-1}\right).
\]
Case ii. $\theta-\frac{d}{p}-1<0$,
\[
\frac{w\left(x\right)^{\theta-d/p-1}}{w\left(y\right)^{\theta-d/p}}\leq C\left(\left|x\right|^{\theta-d/p-1}\left|y\right|^{-\left(\theta-d/p\right)}\right).
\]

\noindent Because $\theta\in\left(1,d\right)$, the condition of \citep[Theorem 1.3.5]{s}
is now easily verified and (\ref{eq:u}) is proved. Therefore,

\[
\left\vert u\right\vert _{H_{\theta,p}^{1}}\leq C\left\vert f\right\vert _{H_{\theta,p}^{0}}.
\]

\noindent The estimate for general $f\in H_{\theta,p}^{0}\left(\mathbf{R}^{d}\right)$
follows by passing to the limit. 
\end{proof}
We will need higher order estimates of the Newton potential. 
\begin{lem}
\label{Lem:lle3}Let $p>1,\,l\geq0,\,d\geq2,\,\theta\in\left(1,d\right)$.
Then there exists $C>0$ so that for all $f\in H_{\theta+l,p}^{l}\left(\mathbf{R}^{d}\right),$
\begin{equation}
\left\vert \nabla N\left(f\right)\right\vert _{H_{\theta+l,p}^{l+1}}\leq C\left\vert f\right\vert _{H_{\theta+l,p}^{l}}.\label{fo2}
\end{equation}
\end{lem}
\begin{proof}
We prove the claim by induction. The case of $l=0$ follows from Lemma
\ref{Lem:lle2}. Let $\psi=N\left(f\right),f\in C_{0}^{\infty}\left(\mathbf{R}^{d}\right)$.
Assume that we proved (\ref{fo2}) for $0\leq l\leq n$ and for each
$p>1$. Consider for a multi-index $\beta\in\mathbf{N}_{0}^{d}$ with
$\left\vert \beta\right\vert =n+1,$ 
\[
\psi_{\beta}=D^{\beta}\psi=N\left(f_{\beta}\right),
\]
where $f_{\beta}=D^{\beta}f.$ Then
\[
\Delta\psi_{\beta}=f_{\beta},\hspace{1em}\Delta\left(\nabla\psi_{\beta}\right)=\nabla f_{\beta}\text{ in }\mathbf{R}^{d},
\]
and $v=w^{n+1}\psi_{\beta}$ solves
\[
\Delta v=F,\hspace{1em}\Delta\left(\nabla v\right)=\nabla F\text{,}
\]
where $F=w^{n+1}f_{\beta}+\Delta\left(w^{n+1}\right)\psi_{\beta}+2\nabla\left(w^{n+1}\right)\cdot\nabla\psi_{\beta}.$

\noindent By (\ref{fo2}) for $l=n$, 
\begin{align}
\left\vert w^{\theta-d/p+n-1}\psi_{\beta}\right\vert _{p}+\left\vert w^{\theta-d/p+n}\nabla\psi_{\beta}\right\vert _{p} & \leq\left\vert f\right\vert _{H_{\theta+n,p}^{n}}.\label{eq:phi_beta}
\end{align}

\noindent Therefore,
\begin{eqnarray}
\left\vert w^{\theta-d/p}F\right\vert _{p} & \leq & C\left[\left\vert w^{\theta-d/p+n+1}f_{\beta}\right\vert _{p}+\left\vert w^{\theta-d/p+n-1}\psi_{\beta}\right\vert _{p}+\left\vert w^{\theta-d/p+n}\nabla\psi_{\beta}\right\vert _{p}\right]\label{eq:F_H0}\\
 & \leq & C\left\vert f\right\vert _{H_{\theta+n+1,p}^{n+1}}.\nonumber 
\end{eqnarray}
Now, 
\begin{eqnarray*}
w^{\theta-d/p-1}\nabla v & = & w^{\theta-d/p-1}\left[\nabla\left(w^{n+1}\right)\psi_{\beta}+w^{n+1}\nabla\psi_{\beta}\right]\\
 & = & \left(n+1\right)w^{\theta-d/p+n-1}\nabla w\psi_{\beta}+w^{\theta-d/p+n}\nabla\psi_{\beta},
\end{eqnarray*}
and by (\ref{eq:phi_beta}),
\[
\left\vert w^{\theta-d/p-1}\nabla v\right\vert _{p}\leq C\left\vert f\right\vert _{H_{\theta+n,p}^{n}}.
\]
Clearly, $\nabla N\left(F\right)$ solves $\Delta\left(\nabla N\left(F\right)\right)=\nabla F$.
Due to Lemma \ref{Lem:lle2} and (\ref{eq:F_H0}), 
\begin{equation}
\left\vert \nabla N\left(F\right)\right\vert _{H_{\theta,p}^{1}}\leq C\left\vert F\right\vert _{H_{\theta,p}^{0}}\leq C\left\vert f\right\vert _{H_{\theta+n+1,p}^{n+1}}<\infty.\label{eq:LpNv}
\end{equation}

\noindent Hence,
\[
\left\vert w^{\theta-d/p-1}\left(\nabla v-\nabla N\left(F\right)\right)\right\vert _{p}\leq C\left\vert f\right\vert _{H_{\theta+n+1,p}^{n+1}}<\infty.
\]
We conclude by Lemma \ref{Lem:lle1} that $\nabla v=\nabla N\left(F\right).$

\noindent From (\ref{eq:LpNv}) for each $p>1,$ there is $C>0$ so
that for all multi-index $\mu\in\mathbf{N}_{0}^{d}$ with $\left|\mu\right|=1$,
\[
\left\vert w^{\theta-d/p}D^{\mu}\nabla v\right\vert _{p}\leq C\left\vert f\right\vert _{H_{\theta+n+1,p}^{n+1}}.
\]
We have, recalling that $v=w^{n+1}\psi_{\beta}$, $\nabla v=\left(n+1\right)w^{n}\nabla w\psi_{\beta}+w^{n+1}\nabla\psi_{\beta}$
and 
\begin{eqnarray*}
D^{\mu}\nabla v & = & \left(n+1\right)\left[nw^{n-1}D^{\mu}w\nabla w+w^{n}D^{\mu}\nabla w\right]\psi_{\beta}\\
 & + & \left(n+1\right)w^{n}D^{\mu}w\nabla\psi_{\beta}+w^{n+1}D^{\mu}\nabla\psi_{\beta}\\
 & = & B+w^{n+1}D^{\mu}\nabla\psi_{\beta},
\end{eqnarray*}
where by (\ref{eq:phi_beta})
\begin{eqnarray*}
\left\vert w^{\theta-d/p}B\right\vert _{p} & \leq & C\left(\left\vert w^{\theta-d/p+n-1}\psi_{\beta}\right\vert _{p}+\left\vert w^{\theta-d/p+n}\nabla\psi_{\beta}\right\vert _{p}\right)\\
 & \leq & C\left\vert f\right\vert _{H_{\theta+n,p}^{n}}.
\end{eqnarray*}
Hence, 
\[
\left\vert w^{\theta-d/p+n+1}D^{\mu}\nabla\psi_{\beta}\right\vert _{p}\leq C\left\vert f\right\vert _{H_{\theta+n+1,p}^{n+1}}
\]
and the statement is proved. The estimate for general $f\in H_{\theta+l,p}^{l}\left(\mathbf{R}^{d}\right)$
follows by passing to the limit.
\end{proof}

\section*{Conclusion}

We employed the Euler-Lagrangian approach to prove the local existence
of the Navier-Stokes equations in weighted Sobolev spaces on the full
domain. This paper is the first to cover general $p>d\geq2$. In the
future, we will expand our approach and prove similar results with
stochastic integrals as the forcing terms. 

\section*{Declarations}

\subsection*{Ethical approval }

not applicable

\subsection*{Funding}

This research was funded by College of Industrial Technology, King
Mongkut's University of Technology North Bangkok (Grant No. Res-CIT320/2023).

\subsection*{Availability of data and materials }

not applicable

\subsection*{Conflict of interest}

On behalf of all authors, the corresponding author states that there
is no conflict of interest.

\end{document}